\tikzset{
  treenode/.style = {align=center, inner sep=1pt, text centered,
    font=\sffamily},
  arn_n/.style = {treenode, circle, white, font=\sffamily\bfseries, draw=black,
    fill=black, text width=1.5em},
  arn_r/.style = {treenode, circle, black, draw=black,
    text width=1.5em, very thick},
  arn_x/.style = {treenode, rectangle, draw=black,
    minimum width=0.5em, minimum height=0.5em}
}
\newcommand{\C}{{\mathbb C}}
\newcommand{\B}{{\mathcal B}}
\newcommand{\HH}{{\mathcal H}}
\newcommand{\lb}{\lambda}
\newcommand{\T}{\tau}
\newcommand{\pn}{{\mathbb P}^{n}}
\newcommand{{\pf}}{{\bf Proof. }}
\newtheorem{theorem}{Theorem}[section]
\newtheorem{corollary}[theorem]{Corollary}
\newtheorem{lemma}[theorem]{Lemma}
\newtheorem{proposition}[theorem]{Proposition}
\newtheorem{example}[theorem]{Example}
\newtheorem{definition}[theorem]{Definition}
\DeclareMathOperator{\re}{Re}
\DeclareMathOperator{\Sp}{Sp}
\DeclareMathOperator{\ran}{ran}
\makeatletter\@addtoreset{equation}{section} \makeatother
\tikzset{
  treenode/.style = {align=center, inner sep=1pt, text centered,
    font=\sffamily},
  arn_n/.style = {treenode, circle, white, font=\sffamily\bfseries, draw=black,
    fill=black, text width=1.5em},
  arn_r/.style = {treenode, circle, black, draw=black,
    text width=1.5em, very thick},
  arn_x/.style = {treenode, rectangle, draw=black,
    minimum width=0.5em, minimum height=0.5em}
}
\begin{document}

\title{Joint spectrum, Group representations, and Julia Set}

\author[R. Yang]{Rongwei Yang}
\address{Rongwei Yang: Department of Mathematics and Statistics, University at Albany, the State University of New York,
Albany, NY 12222, U.S.A.}
 \email{ryang@albany.edu}

\maketitle

\footnotetext{2010 \emph{Mathematics Subject
Classification}: Primary 47A13; Secondary 20C07 and 37F10.\\
\hangindent=1.2em\emph{Key words and phrases}: projective spectrum, dihedral group, $C^*$-algebra, weak containment, self-similar representation, Fatou set, Julia set}

\begin{abstract}
This mostly expository note describes a newly discovered link between joint spectrum, group representations, and multivariable complex dynamics. The first part gives a brief review of some notions of joint spectrum for linear operators, focusing on their connections. The second part contains some new results. It starts with a multivariable characterization of weak containment and amenablility. Then it revisits an application of projective spectrum to the study of self-similar group representations made in \cite{GY}. It shows that the projective spectrum of the infinite dihedral group $D_\infty$ coincides with the Julia set of a rational map $F_\pi: \mathbb{P}^2\to \mathbb{P}^2$ derived from the self-similarity of $D_\infty$. This result improves the main theorem in \cite{GY}, and it provides a rare example where the Julia set of a nontrivial multivariable map can be explicitly described.
\end{abstract}

\section{Introduction}

Consider a complex separable Hilbert space $\HH$. We denote by $B(\HH)$ the set of bounded linear operators on $\HH$. For an element $A\in B(\HH)$, its spectrum $\sigma(A)$ holds important information about $A$ and thus has been extensively studied in operator theory. For several operators $A_1, ..., A_n\in B(\HH)$, however, it is not at all clear how to make a good definition of their joint spectrum. The problem is nontrivial even for $2\times 2$ matrices. Ideally, a good definition of joint spectrum should have the following properties.

 {\bf 1}. It is a natural extension of the classical spectrum $\sigma(A)$. 
 
 {\bf 2}. It is easy to compute in a plethora of examples.
 
 {\bf 3}. It reveals the algebraic connections among the operators, if there are any.
 
 {\bf 4}. It reflects joint behaviors of the operators as a group.

\noindent The search for joint spectrum started in the late 1960s. An extensive effort has been made during the 70s and the 80s which laid the foundation for multivariable operator theory. Indeed, joint spectrum remains a main theme in the theory to this day, with deep results and far-reaching connections with a wide range of fields in mathematics, science, and engineering. The first part of this note provides a brief review of this development, focusing on the connections among various notions of joint spectrum. Some short proofs are given for depth and clarity. The second part describes some applications of projective spectrum to group representation theory. Some new results are given here. In particular, a link between projective spectrum and the Julia set is unambiguously established.

\part{Joint Spectra}

When the operators are commuting, i.e., $A_iA_j=A_jA_i, 1\leq i, j\leq n$, the situation is more tractable, and several notions of joint spectrum have been proposed and well-studied. In this case, one may consider the problem in an abelian unital Banach subalgebra $\B\subset B(\HH)$ containing the operators that is inversion-closed in the sense that if $a\in \B$ is invertible in $B(\HH)$, then its inverse $a^{-1}\in \B$. A bounded linear functional $m$ in the dual space $\B^*$ is said to be {\em multiplicative} if it is an algebra homomorphism from $\B$ to $\C$. The set of multiplicative linear functionals on $\B$ is denoted by $M_\B$. Due to the Banach-Alaoglu theorem, $M_\B$ is compact with respect to the $weak^*$ topology on $\B^*$.

\section{The Spectra of H\"{o}rmander, Taylor, and Harte}

Three influential notions of joint spectrum have been introduced in the 1970s. The first one is studied in H\"{o}rmander \cite{Ho}.
\begin{definition}\label{jsp}
For a tuple $A=(A_1, ..., A_n)$ of elements in a unital abelian Banach algebra ${\mathcal B}$, their joint spectrum Sp$(A)$ is the collection of $\lb=(\lb_1, ..., \lb_n)\in \C^n$ such that the ideal generated by $A_1-\lb_1I, ..., A_n-\lb_n I$ is proper in $\B$.
\end{definition}
\noindent In other words, $\lb$ is not in Sp$(A)$ if and only if there are elements $B_1, ..., B_n$ in $\B$ such that $(A_1-\lb_1I)B_1+\cdots +(A_n-\lb_nI)B_n=I$. The following theorem gives an explicit description of Sp$(A)$.

\begin{theorem}
For arbitrary elements $A_1, ..., A_n$ in a unital abelian Banach algebra ${\mathcal B}$, we have
$\Sp(A)=\{\left(m(A_1), ..., m(A_n)\right)\in \C^n\mid m\in M_\B\}$.
\end{theorem}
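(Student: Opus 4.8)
The plan is to establish the stated equality by proving the two set inclusions separately, leaning on the standard machinery of commutative Banach algebra theory (the Gelfand correspondence between maximal ideals and multiplicative functionals). Throughout I record the basic fact that any $m\in M_\B$ is nonzero, so from $m(I)=m(I\cdot I)=m(I)^2$ we get $m(I)=1$; otherwise $m(a)=m(aI)=m(a)m(I)=0$ for all $a$, forcing $m\equiv 0$.

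For the inclusion $\{(m(A_1),\dots,m(A_n))\mid m\in M_\B\}\subseteq \Sp(A)$, I would fix $m\in M_\B$, set $\lb_i=m(A_i)$, and argue by contradiction. If $\lb=(\lb_1,\dots,\lb_n)\notin \Sp(A)$, then by the characterization recorded after Definition \ref{jsp} there are $B_1,\dots,B_n\in\B$ with $\sum_{i=1}^n (A_i-\lb_i I)B_i=I$. Applying the homomorphism $m$ and using $m(A_i)=\lb_i$ and $m(I)=1$ gives $1=m(I)=\sum_{i=1}^n\bigl(m(A_i)-\lb_i m(I)\bigr)m(B_i)=0$, a contradiction. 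Hence $\lb\in\Sp(A)$. This direction is purely formal once $m(I)=1$ is noted.

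The reverse inclusion $\Sp(A)\subseteq\{(m(A_1),\dots,m(A_n))\mid m\in M_\B\}$ carries the real content. Given $\lb\in\Sp(A)$, the ideal $J$ generated by $A_1-\lb_1 I,\dots,A_n-\lb_n I$ is proper, so by Zorn's lemma it is contained in a maximal ideal $M\supseteq J$. I would then invoke two structural facts: first, that in a unital Banach algebra the invertibles form an open set containing the ball $\{a:\|I-a\|<1\}$, so a proper ideal cannot meet this ball, its closure stays proper, and by maximality $M$ is closed; second, that $\B/M$ is consequently a unital Banach algebra which, since $M$ is maximal and $\B$ is abelian, is a field. The decisive step is the Gelfand--Mazur theorem: a complex unital Banach division algebra is isometrically isomorphic to $\C$. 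Composing the quotient map $\B\to\B/M$ with this isomorphism produces $m\in M_\B$ with $\ker m=M$.

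To finish, since each $A_i-\lb_i I\in J\subseteq M=\ker m$, I get $m(A_i)-\lb_i m(I)=0$, that is $m(A_i)=\lb_i$, so $\lb=(m(A_1),\dots,m(A_n))$ lies in the target set. I expect the main obstacle to be this second inclusion, and within it the pairing of the ``maximal ideals are closed'' argument with Gelfand--Mazur to realize the quotient as $\C$; the first inclusion and the concluding verification are routine by comparison.
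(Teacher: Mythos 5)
Your proof is correct. Note that the paper itself states this theorem without proof, presenting it as the classical description of the joint spectrum going back to Gelfand theory (via H\"{o}rmander's book), so there is no internal argument to compare against; what you have written is exactly the standard proof the paper implicitly relies on. Both inclusions are handled properly: the formal direction uses $m(I)=1$ and the paper's own reformulation of $\lb\notin\Sp(A)$ as $\sum_i (A_i-\lb_i I)B_i=I$, and the substantive direction correctly chains Zorn's lemma, the closedness of maximal ideals (via openness of the group of invertibles), the field property of $\B/M$ in the abelian unital case, and Gelfand--Mazur to produce the multiplicative functional with kernel $M$.
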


The second notion of joint spectrum, on the first sight, is less intuitive. It is defined in Taylor \cite{Ta} and has later become one of the main subjects in multivariable operator theory. Let $\{e_1, ..., e_n\}$ be a basis of a complex vector space $V$. The wedge product $\wedge$ on $V$ is a bilinear operation such that
\begin{equation}\label{wedge}
u\wedge v+v\wedge u=0, \ \ \ u, v\in V.
\end{equation}
For $1\leq p\leq n$, the space of $p$-forms, denote by $\Lambda^p(V)$, is spanned by 
$e_{i_1}\wedge \cdots \wedge e_{i_p}, 1\leq i_1<\cdots <i_p\leq n$. It follows from (\ref{wedge}) that 
$\Lambda^p(V)=\{0\}$ for $p>n$. The space $V$ itself is denoted by $\Lambda^0(V)$. 
The direct sum $\oplus_{0\leq p\leq n} \Lambda^p(V)$ is called the exterior algebra of $V$. For commuting operators $A=(A_1, ..., A_n)$ on a Hilbert space $\HH$, their Taylor spectrum is defined through the following Koszul complex $E(\HH, A)$:
\begin{equation}\label{koszul}
  0 \stackrel{d_{-1}}{\longrightarrow} \HH\otimes \Lambda^0 \stackrel{d_0}{\longrightarrow} \HH\otimes \Lambda^1 \stackrel{d_1}{\longrightarrow} \cdots \stackrel{d_{n-1}}{\longrightarrow} \HH\otimes \Lambda^n \stackrel{d_n}{\longrightarrow} 0,
\end{equation}
where $d_{-1}=d_n=0$, and $d_p: \HH\otimes \Lambda^p(V)\to \HH\otimes \Lambda^{p+1}(V), 0\leq p\leq n-1,$ is induced by the map
\[d_p (x\otimes \omega)=\sum_{i=1}^nA_ix\otimes (e_i\wedge \omega),\ \ \ x\in \HH,\ \omega\in \Lambda^p(V).\]
The commutativity of the operators implies that $d_{p+1}d_p=0$, or in other words, \[\ran d_p\subseteq \ker d_{p+1}, \ \ \ 0\leq p\leq n-1.\] The complex $E(\HH, A)$ is said to be {\em exact} if $\ran d_p=\ker d_{p+1}$ for every $p$. For a vector $\lb=(\lb_1,..., \lb_n)\in \C^n$, the shifted tuple $(A-\lb_1I, ..., A_n-\lb_n I)$ is denoted simply by $A-\lb$. 
\begin{definition}
The {\em Taylor spectrum} of the tuple $A$ is defined as 
\[\sigma_T(A)=\{\lb\in \C^n\mid E(\HH, A-\lb)\ \text{is not exact}\}.\]
\end{definition}
\noindent The readers will gain some intuition by considering the case $n=1$. In general, it can be shown that $\sigma_T(A)$ is a nontrivial compact subset of $\C^n$. A connection between the two joint spectra above is as follows.
 \begin{proposition}
 Let $A=(A_1, ..., A_n)$ be operators in an abelian Banach subalgebra $\B\subset B(\HH)$. Then $\sigma_T(A)\subseteq \Sp(A)$.
 \end{proposition}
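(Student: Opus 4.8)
The plan is to prove the contrapositive: if $\lb\notin\Sp(A)$, then the Koszul complex $E(\HH,A-\lb)$ is exact, so that $\lb\notin\sigma_T(A)$. First I would translate the tuple by replacing each $A_i$ with $A_i-\lb_iI$ (still elements of $\B$), which reduces the problem to the case $\lb=0$; it then suffices to show that $E(\HH,A)$ is exact under the hypothesis $0\notin\Sp(A)$. By the remark following Definition \ref{jsp}, this hypothesis supplies elements $B_1,\dots,B_n\in\B$ satisfying the B\'ezout-type identity $\sum_{i=1}^n A_iB_i=I$. The whole proof will be built on manufacturing a contracting homotopy of the complex out of these $B_i$.

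The key idea is as follows. For each $i$, let $\iota_i:\Lambda^p(V)\to\Lambda^{p-1}(V)$ be the interior product (contraction) that deletes the factor $e_i$ with the appropriate sign, and define the bounded operators $s_p:\HH\otimes\Lambda^p\to\HH\otimes\Lambda^{p-1}$ by $s_p(x\otimes\omega)=\sum_{i=1}^n B_ix\otimes\iota_i\omega$ (these are bounded since the $B_i$ lie in $B(\HH)$). The single structural fact I need from exterior algebra is the identity $\iota_i(e_j\wedge\omega)+e_j\wedge\iota_i\omega=\delta_{ij}\,\omega$. Feeding this into a direct computation of $d_{p-1}s_p+s_{p+1}d_p$, and crucially using the commutativity $A_jB_i=B_iA_j$ available because $\B$ is abelian, I expect to obtain, on $\HH\otimes\Lambda^p$,
\[
(d_{p-1}s_p+s_{p+1}d_p)(x\otimes\omega)=\sum_{i,j}A_jB_i\,x\otimes\big(e_j\wedge\iota_i\omega+\iota_i(e_j\wedge\omega)\big)=\sum_{i=1}^n A_iB_i\,x\otimes\omega=x\otimes\omega.
\]
Thus the identity map of the complex is chain-homotopic to zero.

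From this homotopy identity exactness is immediate: the relation $d_{p}d_{p-1}=0$ already gives $\ran d_{p-1}\subseteq\ker d_p$, while any $\omega\in\ker d_p$ satisfies $\omega=(d_{p-1}s_p+s_{p+1}d_p)\omega=d_{p-1}(s_p\omega)\in\ran d_{p-1}$, so $\ker d_p=\ran d_{p-1}$ for every $p$. Hence $E(\HH,A)$ is exact, $0\notin\sigma_T(A)$, and after undoing the translation we conclude $\lb\notin\sigma_T(A)$, which is the desired inclusion. The main technical point will be the careful sign bookkeeping in the definition of $\iota_i$ and in verifying the homotopy identity; beyond that the argument is formal. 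I would also emphasize that commutativity of $\B$ is used essentially, since it is precisely what collapses the cross terms through the Kronecker delta, so the argument does not transfer verbatim to noncommuting tuples.
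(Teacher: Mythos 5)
Your proposal is correct and follows essentially the same route as the paper: reduce to $\lambda=0$, use the B\'ezout identity $\sum_i A_iB_i=I$ to build a contracting homotopy, and conclude exactness from $d_{p-1}s_p+s_{p+1}d_p=I$. Your operators $s_p$ defined via the interior products $\iota_i$ are exactly the paper's maps $t_p$ (the identity $\iota_i(e_j\wedge\omega)+e_j\wedge\iota_i\omega=\delta_{ij}\omega$ just packages the sign bookkeeping the paper writes out explicitly), so the two arguments coincide.
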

 \begin{proof}
 We show that $\Sp^c(A)\subset \sigma^c_T(A)$. Without loss of generality, we prove for the case $\lb=0$, namely, if $0\notin \Sp(A)$ then $0\notin \sigma_T(A)$. 
 Assume $B_1, ..., B_n$ are elements in $\B$ such that $A_1B_1+\cdots +A_nB_n=I$.
 We set $t_0=0$ and for $1\leq p\leq n$ define $t_p: \HH\otimes \Lambda^p\to \HH\otimes \Lambda^{p-1}$ by 
 \[t_p(x\otimes e_{i_1}\wedge \cdots \wedge e_{i_p})=\sum_{m=1}^p(-1)^{m-1} B_{i_m}x\otimes e_{i_1}\wedge \cdots \wedge \widehat{e_{i_m}}\wedge \cdots \wedge e_{i_p},\]
 where $\hat{a} $ stands for the omission of $a$. Then a direct computation verifies that $t_1d_0 (x)=x$, and for $1\leq p\leq n-1$ we have
 \[(t_{p+1}d_p+d_{p-1}t_p)(x\otimes e_{i_1}\wedge \cdots \wedge e_{i_p})=\sum_{j=1}^nA_jB_j x\otimes e_{i_1}\wedge \cdots \wedge e_{i_p}=x\otimes e_{i_1}\wedge \cdots \wedge e_{i_p}.\]
 Since all involved maps are linear, we have 
 \begin{equation}
 t_{p+1}d_p(h)+d_{p-1}t_p (h)=h,\ \ h\in \HH\otimes \Lambda^p.
 \end{equation} 
 Consequently, if $d_p(h)=0$, then $h\in \ran d_{p-1}$ for each $0\leq p\leq n$. This shows that $E(\HH, A)$ is exact.
 \end{proof}  
Let $\Omega\subset \C^n$ be a bounded domain that contains $\sigma_T(A)$. Then the evaluation $f(A)$ is well-defined through functional calculus. A more intuitive way of defining $f(A)$ is to replace the variables $z_i$ by $A_i$ in the power series expansion of $f$. However, this method requires an argument on the convergence of the new series.
The following spectral mapping theorems hold for Taylor spectrum.  
\begin{theorem}\label{specm}
For every commuting $n$-tuple $A$ and function $f$ holomorphic on a neighborhood of $\sigma_T(A)$, we have $\sigma_T(f(A))=f(\sigma_T(A))$.
\end{theorem}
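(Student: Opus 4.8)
The plan is to interpret $f$ as a holomorphic map $f=(f_1,\dots,f_m)$ on a neighborhood of $\sigma_T(A)$, so that $f(A)=(f_1(A),\dots,f_m(A))$ is a commuting tuple (the functional calculus being a unital algebra homomorphism with $z_i\mapsto A_i$), and to prove the two inclusions separately. The one algebraic device used throughout is the factorization $f_j(z)-f_j(\lambda)=\sum_{i=1}^n (z_i-\lambda_i)g_{ij}(z)$ with $g_{ij}$ holomorphic on a neighborhood of $\sigma_T(A)$; this exists because $f_j-f_j(\lambda)$ is a global section of the ideal sheaf of the point $\lambda$, which is generated by the coordinates $z_i-\lambda_i$ on a Stein neighborhood (Cartan's Theorem A). Applying the functional calculus turns it into the operator identity $f_j(A)-f_j(\lambda)=\sum_{i=1}^n (A_i-\lambda_i)g_{ij}(A)$.

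For $f(\sigma_T(A))\subseteq\sigma_T(f(A))$, I would fix $\lambda\in\sigma_T(A)$, so some homology of $E(\HH,A-\lambda)$ is nonzero, and analyze the enlarged tuple $(A,f(A))$. The two homological facts I would invoke are: (i) each entry of a commuting tuple, hence the whole ideal it generates, acts as the zero map on the Koszul homology of that tuple; and (ii) adjoining one operator $T$ to a commuting tuple $S$ yields a long exact sequence whose connecting maps are multiplication by $T$ on $H_*(S)$. By the factorization, each $f_j(A)-f_j(\lambda)$ lies in the ideal generated by $A-\lambda$, so by (i) it acts as $0$ on $H_*(A-\lambda)$; feeding this into (ii), where a zero connecting map splits the sequence into short exact pieces, and adjoining the $f_j(A)-f_j(\lambda)$ one at a time shows $H_*(A-\lambda,f(A)-f(\lambda))\neq 0$, i.e. $(\lambda,f(\lambda))\in\sigma_T(A,f(A))$. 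Finally, were $f(\lambda)\notin\sigma_T(f(A))$, the complex of $f(A)-f(\lambda)$ would be acyclic and adjoining $A-\lambda$ (whose total complex is a tensor product with an acyclic factor) would keep it acyclic, contradicting the previous line; hence $f(\lambda)\in\sigma_T(f(A))$.

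The reverse inclusion $\sigma_T(f(A))\subseteq f(\sigma_T(A))$ is the harder one. I would argue by contraposition: if $\mu\notin f(\sigma_T(A))$ then $f_1-\mu_1,\dots,f_m-\mu_m$ have no common zero on the compact set $\sigma_T(A)$. The analytic input is to solve the B\'ezout-type equation $\sum_{j=1}^m (f_j(z)-\mu_j)h_j(z)=1$ with $h_j$ holomorphic near $\sigma_T(A)$; this is exactly Oka--Cartan solvability (Cartan's Theorem B, the ideal sheaf generated by zero-free functions being the full structure sheaf). Applying the functional calculus then gives $\sum_{j=1}^m (f_j(A)-\mu_j)h_j(A)=I$, so $\mu\notin\Sp(f(A))$; since $\sigma_T(f(A))\subseteq\Sp(f(A))$ by the Proposition, we conclude $\mu\notin\sigma_T(f(A))$.

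The main obstacle is this last inclusion. The delicate point is not the abstract Oka--Cartan machinery but the passage from ``no common zeros on $\sigma_T(A)$'' to solvability of the B\'ezout equation on a \emph{Stein} neighborhood lying inside the open region where the $f_j-\mu_j$ are genuinely zero-free, together with the verification that the functional calculus is precisely the continuous unital homomorphism needed to push scalar identities to operator identities. An alternative route avoids Cartan's theorem and invokes instead the full projection property $\sigma_T(f(A))=\pi\!\left(\sigma_T(A,f(A))\right)$, whose hard half reduces to the non-emptiness of the Taylor spectrum of $A$ acting on the nonzero (possibly non-Hausdorff) homology module $H_*(f(A)-\mu)$; that non-emptiness is the genuine analytic heart of the theorem.
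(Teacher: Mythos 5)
A preliminary remark: the paper does not prove Theorem 2.5 at all --- it is stated as a known classical result (Taylor's spectral mapping theorem) and used as a black box in what follows --- so your proposal can only be measured against the standard proofs in the literature. Your overall architecture does mirror those proofs (the graph property of $\sigma_T(A,f(A))$ combined with a projection property), and your forward inclusion --- factorization through the ideal of the point, ideal elements annihilating Koszul homology, the long exact sequence splitting when the connecting map vanishes --- is essentially the standard argument for that half.

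The genuine gap is the reverse inclusion, and it is worse than the "delicate point" you describe: the B\'ezout route does not merely need care, it fails. A Stein neighborhood of $\sigma_T(A)$ inside the zero-free region need not exist, and when it does not, the B\'ezout equation can be unsolvable on \emph{every} neighborhood of $\sigma_T(A)$. Concretely, let $A=(M_{z_1},M_{z_2})$ act on $L^2(S^3)$, where $S^3$ is the unit sphere of $\C^2$ with surface measure: this is a commuting normal pair whose Taylor spectrum is $S^3$ (by Corollary 2.9 it equals $\Sp(A)$, the joint essential range of the coordinates). Take $f=\mathrm{id}$ and $\mu=0\notin f(\sigma_T(A))$. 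If $z_1h_1+z_2h_2=1$ held with $h_1,h_2$ holomorphic on some neighborhood $V$ of $S^3$ with $0\notin V$, then restricting to a shell $\{1-\epsilon<\|z\|<1+\epsilon\}\subset V$ and applying Hartogs' extension theorem would extend $h_1,h_2$ to the ball $\{\|z\|<1+\epsilon\}$; the identity propagates by analytic continuation and evaluates at the origin to $0=1$. So no solution exists on any neighborhood of $\sigma_T(A)$, even though the conclusion $0\notin\sigma_T(A)$ is true: your contrapositive simply cannot be executed. (The same objection applies, in milder form, to the factorization $f_j-f_j(\lambda)=\sum_i(z_i-\lambda_i)g_{ij}$ in your forward half: the Stein neighborhood you invoke for Cartan's theorems need not exist; this is harmless for polynomial $f$, where the $g_{ij}$ are polynomials, but for general holomorphic $f$ it requires a different justification.) Your named fallback --- the projection property $\sigma_T(f(A))=\pi(\sigma_T(A,f(A)))$, whose lifting half rests on non-emptiness of the spectrum of $A$ acting on the possibly non-Hausdorff Koszul homology of $f(A)-\mu$ --- is indeed how Taylor closes this gap, but you only name it; that lifting is the analytic heart of the theorem, requires spectral theory on quotient topological vector spaces (or the Cauchy--Weil calculus), and is precisely why the paper quotes the theorem rather than proving it. As written, the hard inclusion is not established.
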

 
Unlike the two spectra above, the notion of {\em Harte spectrum} introduced in \cite{Ha72} is valid for noncommuting operators.
 \begin{definition}
For a tuple $A$ of operators on $\HH$, the joint spectrum  $\sigma_H(A)$ is the set of vectors $\lb\in \C^n$ such that at least one of the following two conditions holds: 

1) there exists a sequence of unit vectors $x_k\in \HH$ such that 
 \begin{equation}\label{bbelow}
 \lim_{k\to \infty} \|(A_j-\lb_j)x_k\|=0,\ \forall 1\leq j\leq n;
 \end{equation}
 
2) the sum of vector spaces $(A_1-\lb_1)\HH+\cdots +(A_n-\lb_n)\HH$ is not equal to $\HH$. 
\end{definition}

The relation between Harte spectrum and Taylor spectrum is described as follows.
 \begin{proposition}
 For a commuting tuple of bounded linear operators $A=(A_1, ..., A_n)$ on $\HH$, we have $\sigma_H(A)\subseteq \sigma_T(A)$.
 \end{proposition}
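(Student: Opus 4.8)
The plan is to prove the equivalent inclusion of complements, $\sigma_T^c(A)\subseteq \sigma_H^c(A)$, and after translating the tuple it suffices to treat $\lb=0$. So I would assume that the Koszul complex $E(\HH,A)$ is exact and deduce that neither of Harte's two conditions holds at $0$.

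To rule out condition 2, I would read off exactness at the top of the complex. Since $d_n=0$, exactness at $\HH\otimes\Lambda^n$ gives $\ran d_{n-1}=\ker d_n=\HH\otimes\Lambda^n$, so $d_{n-1}$ is surjective. A direct computation of $d_{n-1}$ on the basis vectors $x\otimes e_1\wedge\cdots\wedge\widehat{e_j}\wedge\cdots\wedge e_n$ shows that only the $i=j$ term of the defining sum survives, so that
\[ \ran d_{n-1}=\Big(\sum_{j=1}^n A_j\HH\Big)\otimes(e_1\wedge\cdots\wedge e_n). \]
Under the identification $\HH\otimes\Lambda^n\cong\HH$, surjectivity of $d_{n-1}$ becomes $\sum_{j=1}^n A_j\HH=\HH$, which is exactly the failure of Harte's condition 2.

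To rule out condition 1, the key observation is that exactness forces $d_0$ to be bounded below. Indeed, exactness at $\HH\otimes\Lambda^0$ gives $\ker d_0=\ran d_{-1}=\{0\}$, so $d_0$ is injective; and exactness at $\HH\otimes\Lambda^1$ gives $\ran d_0=\ker d_1$, which is closed because $d_1$ is bounded. An injective bounded operator with closed range is bounded below, by applying the open mapping theorem to $d_0$ viewed as a bijection onto the Banach space $\ran d_0$; hence there is a constant $c>0$ with $\|d_0 x\|\geq c\|x\|$ for all $x\in\HH$. Since $\|d_0 x\|^2=\sum_{i=1}^n\|A_i x\|^2$ for the natural inner product, every unit vector $x$ satisfies $\sum_i\|A_i x\|^2\geq c^2$, so no sequence of unit vectors can make all the norms $\|A_i x_k\|$ tend to $0$. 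This is the failure of Harte's condition 1, and combining the two parts yields $0\notin\sigma_H(A)$.

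I expect the crux to be the bounded-below step for $d_0$: the point is that exactness supplies a closed range \emph{for free}, since kernels of bounded operators are automatically closed, and this is what upgrades mere injectivity to a quantitative lower bound. The computation identifying $\ran d_{n-1}$ with $\sum_j A_j\HH$ and the reduction to $\lb=0$ are routine by comparison.
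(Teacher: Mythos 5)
Your proof is correct and is essentially the paper's argument in contrapositive form: the paper's case (ii) is exactly your bounded-below step for $d_0$ (closed range borrowed from $\ker d_1$, then the open mapping theorem, then $\|d_0x\|^2=\sum_i\|A_ix\|^2$), and its case (iii) is your identification of surjectivity of $d_{n-1}$ with $\sum_j A_j\HH=\HH$. The only difference is organizational: by assuming exactness and refuting both Harte conditions at once, you absorb the paper's separate case of an exact common kernel vector into the bounded-below argument, which is a harmless streamlining.
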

 \begin{proof}
 Without loss of generality, we show that if the $0$ vector is in $\sigma_H(A)$ then it is in $\sigma_T(A)$. According to the definition, there are three cases when $0\in \sigma_H(A)$.
 
 i) There exists a nontrivial vector $x\in \HH$ such that $A_ix=0$ for all $1\leq i\leq n$. Clearly, this implies $d_0(x)=0$ and hence $x\in \sigma_T(A)$.
 
 ii) It holds that $\cap _{i=1}^n \ker A_i=\{0\}$, but there exists a sequence of unit vectors $x_k\in \HH$ such that (\ref{bbelow}) holds for $\lambda=0$. Then we must have $\ran d_0\neq \ker d_1$ in this case. Otherwise, $\ran d_0$ would be closed, and it would follow that the map $d_0: \HH\otimes \Lambda^0\to \ran d_0$ is invertible, which contradicts (\ref{bbelow}) for $\lb=0$ because 
 \begin{align*}
 \|d_0 x\|^2=\|A_1 x\otimes e_1+\cdots +A_n x\otimes e_n\|^2=\|A_1 x\|^2+\cdots +\|A_n x\|^2
\end{align*}
is bounded below by a positive constant for all unit vector $x$.

iii) The space $A_1\HH+\cdots +A_n\HH$ is not equal to $\HH$. This implies that $d_{n-1}$ is not surjective.

\noindent In all three cases the Koszul complex $E(\HH, A)$ is not exact, and therefore $0\in \sigma_T(A)$.
\end{proof}

If the first condition in (\ref{bbelow}) is met, then the vector $\lb$ is said to be in the {\em joint approximate point spectrum} of $A$ denoted by $\sigma_\pi(A)$ (Dash \cite{Da73}). For a tuple of normal operators, the following fact is due to Cho-Takaguchi \cite{CT82}, and its proof is elegant.
\begin{theorem}
If $A$ is a tuple of commuting normal operators, then $\sigma_\pi(A)=\Sp(A)$.
\end{theorem}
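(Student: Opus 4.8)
The plan is to realize the tuple inside the commutative unital $C^*$-algebra $\B=C^*(A_1,\dots,A_n,I)$, which is genuinely abelian precisely because the $A_j$ are commuting normals: by the Fuglede--Putnam theorem each $A_j^*$ commutes with every $A_k$, so the whole family $\{A_j,A_j^*\}$ generates a commutative algebra. Working in this $\B$, the Gelfand-type description recalled above gives $\Sp(A)=\{(m(A_1),\dots,m(A_n))\mid m\in M_\B\}$, and on a commutative $C^*$-algebra every character $m$ is automatically a $*$-homomorphism, so that $m(A_j^*)=\overline{m(A_j)}$. This last identity is what forces the two spectra to coincide, and it is exactly where normality enters.

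The inclusion $\sigma_\pi(A)\subseteq\Sp(A)$ is the easy half and needs no normality. If $\lb\notin\Sp(A)$ there are $B_1,\dots,B_n\in\B$ with $\sum_j B_j(A_j-\lb_j)=I$, whence for every unit vector $x$ one has $1=\|x\|\leq\sum_j\|B_j\|\,\|(A_j-\lb_j)x\|$, so no sequence of unit vectors can drive all the quantities $\|(A_j-\lb_j)x_k\|$ to $0$; thus $\lb\notin\sigma_\pi(A)$. Alternatively, this follows from the chain $\sigma_\pi(A)\subseteq\sigma_H(A)\subseteq\sigma_T(A)\subseteq\Sp(A)$ established in the propositions above.

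For the reverse inclusion I would collapse the joint condition onto a single self-adjoint operator. Set $T=\sum_{j=1}^n(A_j-\lb_j)^*(A_j-\lb_j)$, a positive element of $\B$. For a unit vector $x$ one has $\langle Tx,x\rangle=\sum_j\|(A_j-\lb_j)x\|^2$, so $\lb\in\sigma_\pi(A)$ holds exactly when $T$ fails to be bounded below, i.e. when $0\in\sigma(T)$; and since $T\geq 0$ is self-adjoint, $0\in\sigma(T)$ is equivalent to the non-invertibility of $T$. Now take $\lb\in\Sp(A)$ with $\lb_j=m(A_j)$. Applying $m$ and using that it is a $*$-homomorphism gives $m(T)=\sum_j\overline{m(A_j-\lb_j)}\,m(A_j-\lb_j)=\sum_j|\lb_j-\lb_j|^2=0$. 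Hence $T$ lies in the proper ideal $\ker m$ and cannot be invertible in $\B$; by inverse-closedness of the unital $C^*$-subalgebra $\B$ it is not invertible in $B(\HH)$ either, so $0\in\sigma(T)$ and therefore $\lb\in\sigma_\pi(A)$.

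The one genuine subtlety, and the step I would flag as the crux, is the computation $m(T)=0$: it rests on $m(A_j^*)=\overline{m(A_j)}$, which is available only because $\B$ is a $C^*$-algebra and $m$ a $*$-character. Normality of the $A_j$ is exactly what makes $\B$ a commutative $C^*$-algebra and so guarantees this identity. For a non-normal tuple a multiplicative functional may evaluate $A_j^*$ away from $\overline{\lb_j}$, in which case $m(T)$ need not vanish and $\Sp(A)$ is in general strictly larger than $\sigma_\pi(A)$.
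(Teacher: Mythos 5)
Your proof is correct, and its hard direction runs along a genuinely different track from the paper's. Both arguments start identically: the Fuglede theorem makes $\B=C^*(I,A_1,\dots,A_n)$ a commutative $C^*$-algebra, which is inversion-closed, and both pivot on the positive operator $T=\sum_j(A_j-\lb_j)^*(A_j-\lb_j)$, whose quadratic form encodes membership in $\sigma_\pi(A)$. But the paper proves $\Sp(A)\subseteq\sigma_\pi(A)$ in contrapositive and purely constructively: if $0\notin\sigma_\pi(A)$ then $T$ is bounded below, hence invertible, and the explicit elements $B_k=A_k^*\bigl(\sum_j A_jA_j^*\bigr)^{-1}\in\B$ solve the Bezout identity $\sum_k A_kB_k=I$, so $0\notin\Sp(A)$; no character theory is invoked at all (in particular, Theorem 2.2 is never used). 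You instead argue directly: a point of $\Sp(A)$ comes from a character $m$ via Theorem 2.2, the automatic $*$-property of characters on a commutative $C^*$-algebra forces $m(T)=0$, so $T$ lies in the proper ideal $\ker m$ and is non-invertible, and positivity of $T$ converts $0\in\sigma(T)$ into a joint approximate null sequence. The two implications are contrapositives of one another, but the mechanisms differ: the paper's construction is more elementary and self-contained (it needs neither the Gelfand description of $\Sp(A)$ nor the fact that characters are $*$-homomorphisms), while your route makes the role of normality conceptually sharper --- it is precisely what guarantees $m(A_j^*)=\overline{m(A_j)}$ --- and isolates the approximate-eigenvector argument in the spectral theory of the single operator $T$. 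Your easy-direction estimate $1\leq\sum_j\|B_j\|\,\|(A_j-\lb_j)x_k\|$ also fills in the step the paper dismisses as clear. Both proofs are valid.
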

\begin{proof}
First of all, the Fuglede theorem (\cite{Fu}) states that if $T$ and $N$ are commuting operators on $\HH$ and $N$ is normal, then $T$ also commutes with $N^*$. Since the $A_j$s are commuting and are all normal, we have $A_iA_j^*=A_j^*A_i$ for all $1\leq i, j\leq n$. Thus the $C^*$-algebra $\B$ generated by $I, A_1, ..., A_n$ is abelian. Moreover, it is known that every unital $C^*$-algebra is inversion-closed (\cite{Do}).

It is clear that $\sigma_\pi(A)\subseteq \Sp(A)$. To show the inclusion in the other direction, without loss of generality we show that if $0\notin \sigma_\pi(A)$ then $0\notin \Sp(A)$. In this case, there is a constant $\alpha>0$ such that $\sum_{j=1}^n\|A_jx\|^2\geq \alpha \|x\|^2, j=1, ..., n$ for all nonzero $x$, which implies that $\sum_{j=1}^nA_j^*A_j$ is bounded below and hence invertible.  If we set \[B_k=A_k^*\big(\sum_{j=1}^nA_jA_j^*\big)^{-1}, k=1, ..., n,\] then $B_k\in \B$ and $A_1B_1+\cdots +A_nB_n=I$. This shows $0\notin \Sp(A)$.
\end{proof}

From the aforementioned facts, it follows an appealing identity about normal operators. 
\begin{corollary}
If $A$ is a tuple of commuting normal operators on a Hilbert space $\HH$, then 
$\sigma_\pi(A)=\sigma_H(A)=\sigma_T(A)=\Sp(A)$.
\end{corollary}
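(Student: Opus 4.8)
The plan is to assemble the four relations already established into a single cycle of inclusions, which then collapses to a chain of equalities. First I would fix the algebra once and for all: since the $A_j$ are commuting normal operators, Fuglede's theorem guarantees that the $C^*$-algebra $\B$ generated by $I, A_1, \dots, A_n$ is abelian, and every unital $C^*$-algebra is inversion-closed. With this single choice of $\B$, all four spectra are computed in a consistent setting, so the Propositions and Theorems proved above apply simultaneously to the same data.

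Next I would record the inclusions in the right order. Straight from the definition, $\sigma_\pi(A)\subseteq \sigma_H(A)$, since $\sigma_\pi(A)$ is exactly the set of $\lb$ satisfying condition 1) in the definition of $\sigma_H(A)$, whereas membership in $\sigma_H(A)$ requires only condition 1) \emph{or} condition 2). The Proposition on commuting tuples then gives $\sigma_H(A)\subseteq \sigma_T(A)$, and the earlier Proposition gives $\sigma_T(A)\subseteq \Sp(A)$ for any tuple lying in the abelian subalgebra $\B$. Finally, the Cho--Takaguchi theorem supplies the closing link $\Sp(A)=\sigma_\pi(A)$, valid precisely because $A$ is a commuting normal tuple.

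Chaining these together yields
\[
\sigma_\pi(A)\subseteq \sigma_H(A)\subseteq \sigma_T(A)\subseteq \Sp(A)=\sigma_\pi(A),
\]
so every inclusion in the loop is squeezed into an equality, giving $\sigma_\pi(A)=\sigma_H(A)=\sigma_T(A)=\Sp(A)$. I expect no genuine analytic difficulty in this argument; it is purely a matter of concatenating known facts. The only point demanding care is the one flagged in the first step, namely ensuring that one and the same algebra $\B$ underlies the definitions of both $\Sp(A)$ and $\sigma_T(A)$ (and hence the Proposition relating them). This is exactly what Fuglede's theorem secures, by forcing the generated $C^*$-algebra to be abelian so that the commutative theory is in force throughout.
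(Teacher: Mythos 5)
Your proposal is correct and is essentially the paper's own argument: the corollary is stated there as an immediate consequence of "the aforementioned facts," namely the chain $\sigma_\pi(A)\subseteq\sigma_H(A)\subseteq\sigma_T(A)\subseteq\Sp(A)$ (definition, Proposition on Harte versus Taylor, Proposition on Taylor versus $\Sp$) closed by the Cho--Takaguchi identity $\Sp(A)=\sigma_\pi(A)$. Your added care in fixing the single abelian $C^*$-algebra $\B$ via Fuglede's theorem matches exactly the setup used in the paper's proof of the Cho--Takaguchi theorem, so nothing differs in substance.
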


\section{Projective Spectrum} 

Unfortunately, the joint spectra Sp$(A)$ and $\sigma_T(A)$ don't have a natural generalization to noncommuting operators. While the Harte spectrum is defined for general operators, it is usually difficult to compute in the noncommuting case. These challenges have motivated a drastically different approach. In \cite{Ya}, the notion of projective spectrum was introduced as follows.
\begin{definition}\label{proj}
 For several elements $A_0, A_1, ..., A_n$ in a unital Banach algebra ${\mathcal B}$, their projective spectrum $p(A)$ is the collection of $z\in \pn$ such that the multiparameter linear pencil $A(z)=z_0A_0+z_1A_1+\cdots +z_nA_n$ is not invertible in ${\mathcal B}$.
 \end{definition}
The letter ``$A$'' in ``$p(A)$'' refers to the pencil $A(z)$. Apparently, this definition does not take the unit $I$ as a base point, and it treats the linears operators in an equal footing. This conspicuously simple and symmetric definition enables the computation of many examples, and it has led to an extensive spectral theory with far-reaching connections. The upcoming book \cite{Ya24} will give a detailed account of this development. The following fact is a consequence of the Hartogs extension theorem in several complex variables.

\begin{theorem}
For any elements $A_0, A_1, ..., A_n$ in a unital Banach algebra ${\mathcal B}$, their projective spectrum $p(A)$ is a nonempty compact subset of $\pn$.
\end{theorem}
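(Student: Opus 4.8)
The plan is to verify in turn that $\pr$ is a well-defined subset of $\pn$, that it is closed (hence compact), and that it is nonempty, with only the last step genuinely relying on the Hartogs extension theorem. For well-definedness, I would note that $A(\lb z)=\lb\,A(z)$ for every nonzero scalar $\lb$, so $A(\lb z)$ is invertible in $\B$ exactly when $A(z)$ is; the locus of $z\in\C^{n+1}\setminus\{0\}$ at which $A(z)$ fails to be invertible is therefore invariant under scaling and descends to a well-defined subset of $\pn$.

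For compactness I would exploit the openness of the group $\G\subset\B$ of invertible elements. The map $z\mapsto A(z)$ is linear, hence continuous, so the preimage of $\G$ is open in $\C^{n+1}\setminus\{0\}$; being scaling-invariant, it projects to an open subset of $\pn$, which is precisely the complement of $\pr$. Hence $\pr$ is closed, and since $\pn$ is compact, $\pr$ is compact.

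The nonemptiness is the crux, and here is where I expect the real content. The plan is to argue by contradiction: suppose $A(z)$ is invertible for every $z\in\C^{n+1}\setminus\{0\}$. Because inversion is holomorphic on $\G$ and $A(z)$ depends holomorphically (indeed linearly) on $z$, the $\B$-valued function $F(z)=A(z)^{-1}$ is holomorphic on $\C^{n+1}\setminus\{0\}$. Since there are $n+1\geq 2$ variables, the isolated singularity at the origin is removable: by the Banach-space-valued Hartogs extension theorem, $F$ extends to a holomorphic map $G\colon\C^{n+1}\to\B$. The relation $A(z)G(z)=I$ holds on the dense set $\C^{n+1}\setminus\{0\}$, so by continuity it persists at the origin, giving $A(0)G(0)=I$; but $A(0)=0$, which forces $0=I$ in $\B$, a contradiction. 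Hence $\pr\neq\emptyset$.

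The step I expect to be the main obstacle is the rigorous invocation of Hartogs extension in the operator-valued setting, since the classical statement is for scalar functions. I would justify it by reduction to the scalar case: for each bounded functional $\phi\in\B^*$ the composition $\phi\circ F$ is a scalar holomorphic function on $\C^{n+1}\setminus\{0\}$, which extends across the origin by the classical Hartogs theorem, and these scalar extensions are mutually compatible and jointly weakly holomorphic, hence assemble into the desired $\B$-valued extension $G$. Alternatively, one may appeal directly to the Bochner--Martinelli integral representation, which both produces $G$ and makes the continuity argument transparent.
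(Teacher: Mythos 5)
Your proposal is correct and follows exactly the route the paper has in mind: the paper gives no proof of its own, merely noting that the result ``is a consequence of the Hartogs extension theorem,'' and your argument (scaling invariance for well-definedness, openness of the group of invertibles for compactness, and Hartogs extension of $z\mapsto A(z)^{-1}$ across the origin to contradict $A(0)=0$) is precisely the standard proof from \cite{Ya} being alluded to. Your handling of the Banach-space-valued extension via scalarization and the Bochner--Martinelli integral is also the right way to close that technical step.
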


In the case the operators are commuting, it is a natural question whether or how the projective spectrum $p(A)$ is related to the Taylor spectrum $\sigma_T(A)$. 
\begin{proposition}\label{abelian}
Let $A_0, A_1, ..., A_n$ be commuting operators on ${\mathcal H}$. Then

 a) $p(A)=\cup_{w\in \sigma_T(A)}H_{w}$, where $H_w$ is the projective hyperplane \[\{z\in \pn \mid z_0w_0+\cdots +z_nw_n=0\};\]
 
 b) if $f(z)=(f_1(z), ..., f_m(z))$ is a vector-valued holomorphic function on an open neighborhood of $\sigma_T(A)$, then $p(f(A))=\cup_{w\in \sigma_T(A)}H_{f(w)}$.
\end{proposition}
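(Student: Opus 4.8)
The plan is to reduce, for each fixed point $z\in\pn$, the invertibility of the \emph{single} operator $A(z)=z_0A_0+\cdots+z_nA_n$ to a statement about its ordinary spectrum, and then to invoke the spectral mapping theorem (Theorem \ref{specm}). The starting observation is that $A(z)$ is a scalar-valued holomorphic---indeed linear---function of the commuting tuple $A=(A_0,\dots,A_n)$. Writing $\ell_z(w)=z_0w_0+\cdots+z_nw_n$ for $w\in\C^{n+1}$, linearity of the holomorphic functional calculus gives $A(z)=\ell_z(A)$. Since the Taylor spectrum of a single operator coincides with its usual spectrum, Theorem \ref{specm} applied to the scalar function $\ell_z$ yields
\[
\sigma(A(z))=\sigma_T(\ell_z(A))=\ell_z(\sigma_T(A))=\{z_0w_0+\cdots+z_nw_n\mid w\in\sigma_T(A)\}.
\]

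For part a) I would then unwind the definitions. A representative $z$ lies in $\pr$ exactly when $A(z)$ is not invertible, that is, when $0\in\sigma(A(z))$. By the displayed identity this happens if and only if there is some $w\in\sigma_T(A)$ with $z_0w_0+\cdots+z_nw_n=0$, i.e.\ $z\in H_w$ for some $w\in\sigma_T(A)$. Hence $\pr=\cup_{w\in\sigma_T(A)}H_w$. I would note in passing that everything is well defined on $\pn$: replacing $z$ by $\lb z$ scales $A(z)$ by $\lb\neq 0$ and thus preserves invertibility, and it leaves each hyperplane $H_w$ unchanged.

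Part b) proceeds along the same lines, now composing with $f$. For a fixed representative $z$, set $h_z(w)=z_1f_1(w)+\cdots+z_mf_m(w)$; this is holomorphic on the given neighborhood of $\sigma_T(A)$ since it is a linear combination of the $f_i$. By linearity of the functional calculus, the pencil $\sum_{i=1}^m z_if_i(A)$ of $f(A)=(f_1(A),\dots,f_m(A))$ equals $h_z(A)$. Applying Theorem \ref{specm} to the scalar function $h_z$ gives $\sigma(h_z(A))=h_z(\sigma_T(A))$, so $h_z(A)$ fails to be invertible precisely when $h_z(w)=0$ for some $w\in\sigma_T(A)$, i.e.\ when $z\in H_{f(w)}$. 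This is exactly $p(f(A))=\cup_{w\in\sigma_T(A)}H_{f(w)}$; part a) is then the special case $f=\mathrm{id}$.

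The step requiring the most care is the identification $A(z)=\ell_z(A)$ together with the reduction $\sigma_T(\ell_z(A))=\sigma(A(z))$: one must check that the multivariable holomorphic functional calculus of the commuting tuple is genuinely being applied to a scalar-valued function, so that the resulting object is the single operator $A(z)$ and the general spectral mapping theorem specializes to the classical statement for one operator. Once this identification is in place, both parts follow immediately from Theorem \ref{specm}, and no direct analysis of the Koszul complex is needed.
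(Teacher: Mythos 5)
Your proposal is correct and takes essentially the same approach as the paper: both identify the pencil $A(z)$ with the image of the scalar linear function $\ell_z(w)=z_0w_0+\cdots+z_nw_n$ (the paper's $f_z$) under the holomorphic functional calculus of the commuting tuple, and then invoke the spectral mapping theorem (Theorem \ref{specm}) to conclude $\sigma(A(z))=\ell_z(\sigma_T(A))$. The only cosmetic difference is in part b), where the paper applies part a) to the commuting tuple $f(A)$ together with $\sigma_T(f(A))=f(\sigma_T(A))$, whereas you apply the spectral mapping theorem directly to the composed scalar function $\ell_z\circ f$; the two arguments are interchangeable.
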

\begin{proof}
The proof is a direct application of the spectral mapping theorem (Theorem 2.5). Indeed, if we consider the linear function \[f_z(w)=z_0w_0+\cdots +z_nw_n,\ \ \ z, w\in \C^{n+1},\] then $A(z)=f_z(A)$, and hence 
$\sigma(A(z))=f_z(\sigma_T(A)),\ z\in \C^{n+1}$. This implies that $A(z)$ is not invertible if and only if there exists a $w\in \sigma_T(A)$ such that $f_z(w)=0$, i.e., $z\in H_w$. To prove b), first note that the tuple $f(A)=(f_1(A), ..., f_m(A))$ is well-defined by functional calculus, and it is commuting. The claim then follows from Theorem \ref{specm} and part a).
\end{proof}
 Proposition \ref{abelian} b) can be viewed as the spectral mapping theorem for projective spectrum.
Remarkably, the converse of Proposition \ref{abelian} a) is also true in some cases. The following theorem is due to Chagouel-Stessin-Zhu \cite{CSZ}. The proof here is simplified.

\begin{theorem}\label{normal}
Let $A_1, ..., A_n$ be normal $k\times k$ matrices. Then they pairwise commute if and only if the projective spectrum of the tuple $(I, A_1, ..., A_n)$ is a union of projective hyperplanes in $\pn$.
\end{theorem}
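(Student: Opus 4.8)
The plan is to treat the two implications separately, with essentially all the work in the ``only if'' direction (union of hyperplanes $\Rightarrow$ commuting). For the easy direction, if $A_1,\dots,A_n$ pairwise commute then $(I,A_1,\dots,A_n)$ is itself a commuting tuple, so Proposition \ref{abelian}(a) immediately exhibits $p(I,A_1,\dots,A_n)$ as a union of projective hyperplanes; alternatively one simultaneously diagonalizes the normal family $I,A_1,\dots,A_n$ and reads the factorization of the determinant off the diagonal entries directly.

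For the converse I would first pass from the geometric hypothesis to an algebraic one. Since the $A_j$ are matrices, $A(z)=z_0 I+z_1A_1+\cdots+z_nA_n$ fails to be invertible exactly where the homogeneous degree-$k$ polynomial $D(z):=\det A(z)$ vanishes, so $p(A)$ is the zero locus $V(D)$. If $V(D)=\bigcup_i V(\ell_i)$ is a finite union of hyperplanes, then unique factorization in $\C[z_0,\dots,z_n]$ together with the Nullstellensatz forces $D=c\prod_i\ell_i^{a_i}$ into linear forms: each irreducible factor of $D$ has its zero set inside $\bigcup_i V(\ell_i)$, hence (being irreducible) inside a single $V(\ell_i)$, hence proportional to $\ell_i$. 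The place where the base operator $A_0=I$ enters is that the coefficient of $z_0^k$ in $D$ equals $1$, so every $\ell_i$ has nonzero $z_0$-coefficient; normalizing it to $1$ and substituting $z_0=-\lambda$ shows that the eigenvalues of $B(z):=z_1A_1+\cdots+z_nA_n$ are the \emph{homogeneous} linear forms $\ell_i'(z)=\sum_{j\ge 1} w^{(i)}_j z_j$, repeated with multiplicities $a_i$, for every $z\in\C^n$.

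The heart of the argument is to upgrade ``the eigenvalues of $B(z)$ depend linearly on $z$'' to ``every pencil $B(z)$ is normal,'' after which Fuglede's theorem (\cite{Fu}) closes the loop. Here I would invoke Schur's inequality: for any matrix $B$ with eigenvalues $\lambda_l$ one has $\sum_l|\lambda_l|^2\le\operatorname{tr}(B^*B)$, with equality precisely when $B$ is normal. Thus the defect
\[
\Delta(z)=\operatorname{tr}\!\big(B(z)^*B(z)\big)-\sum_l|\lambda_l(B(z))|^2
=\sum_{j,j'}\overline{z_j}\,z_{j'}\,\operatorname{tr}(A_j^*A_{j'})-\sum_i a_i\,|\ell_i'(z)|^2
\]
is a \emph{positive semidefinite Hermitian form} in $z$ (both terms are manifestly Hermitian forms, using that the $\ell_i'$ are $\C$-linear, which is where the homogeneity secured above is essential). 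Evaluating at a standard basis vector gives $\Delta(e_j)=\operatorname{tr}(A_j^*A_j)-\sum_l|\lambda_l(A_j)|^2=0$, since each $A_j$ is normal. But a positive semidefinite form vanishing on a vector vanishes on its whole span, so vanishing on $e_1,\dots,e_n$ forces $\Delta\equiv 0$. Hence equality holds in Schur's inequality for all $z$, i.e. $B(z)$ is normal for every $z$. Applying this to $B(e_i+te_j)=A_i+tA_j$ and comparing the coefficients of $t$ in $(A_i+tA_j)^*(A_i+tA_j)=(A_i+tA_j)(A_i+tA_j)^*$ yields $A_i^*A_j=A_jA_i^*$; since $A_i^*$ is normal, Fuglede's theorem then gives $A_jA_i=A_iA_j$.

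I expect the main obstacle to be the middle passage: converting the factorization of $D(z)$ into usable analytic data and then proving the Schur defect is identically zero. The factorization step must carefully distinguish the set-theoretic equality $p(A)=V(D)$ from the polynomial factorization of $D$, and the structural hypothesis $A_0=I$ is exactly what renders the eigenvalue forms homogeneous, so that $\Delta$ is a genuine Hermitian form to which the ``vanishing on a basis'' observation applies. Once $\Delta\equiv 0$ is established, normality of every pencil together with Fuglede's theorem makes the conclusion routine.
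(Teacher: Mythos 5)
Your argument is correct, but it takes a genuinely different route from the paper's proof. The paper reduces sufficiency to the case $n=2$ and inducts on the matrix size $k$: using normality of $A_2$ it picks an eigenvalue $\mu$ with $|\mu|=\|A_2\|$, extracts a linear factor $L(z)=z_0+\lambda z_1+\mu z_2$ of the characteristic polynomial, and runs a perturbation argument on $A_1+\epsilon_s A_2$ (limits of kernel vectors, a projection identity, and the equality case of Cauchy--Schwarz) to produce a common eigenvector, which is then split off so the argument can be repeated until $A_1,A_2$ are simultaneously diagonalized. You work globally instead: the factorization $D=c\prod_i\ell_i^{a_i}$ (your UFD/Nullstellensatz step is sound) makes the eigenvalues of $B(z)=z_1A_1+\cdots+z_nA_n$ honest $\mathbb{C}$-linear forms in $z$, and the Schur defect $\Delta(z)$ is then a positive semidefinite Hermitian form vanishing at each $e_j$ by normality of the $A_j$, hence vanishing identically; so every pencil $B(z)$ is normal, and polarization plus Fuglede gives commutativity. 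This is essentially the classical Motzkin--Taussky/Wiegmann ``property L'' argument for pairs of normal matrices, here run for all $n$ at once. Two small points to tighten: the hypothesis gives only a union of hyperplanes, but finiteness is automatic because each hyperplane contained in $V(D)$ is an irreducible component of $V(D)$; and the identity expressing normality of $A_i+tA_j$ involves both $t$ and $\bar{t}$, so ``comparing coefficients of $t$'' should be implemented by evaluating at $t=1$ and $t=i$. As for trade-offs: the paper's proof is self-contained linear algebra and ends with an explicit simultaneous diagonalization, at the price of the reduction to $n=2$ and an induction on $k$; yours avoids induction and perturbation entirely and yields the stronger intermediate fact that the whole pencil $z_1A_1+\cdots+z_nA_n$ is normal, at the price of invoking Schur's inequality with its equality case and Fuglede's theorem.
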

\begin{proof}
In view of Proposition \ref{abelian} a), it only remains to prove the sufficiency. Since we need to prove that the matrices pairwise commute, without loss of generality we do so for the case $n=2$. Consider the homogeneous polynomial $Q_A(z)=\det (z_0+z_1A_1+z_2A_2)$. The assumption indicates that it is a product of linear factors. We shall first show that $A_1$ and $A_2$ share a common eigenvector.

Since $A_2$ is normal, we can pick an eigenvalue $\mu$ of $A_2$ such that $|\mu|=\|A_2\|$.
Then the point $(\mu, 0, -1)$ is a zero of $Q_A$. Hence, one factor of $Q_A$ must be of the form $L(z)=z_0+\lb z_1+\mu z_2$, where the coefficient $\lb$ is in fact an eigenvalue of $A_1$. 

Pick any sequence of nonzero complex numbers $\{\epsilon_s\mid s=1, 2, ... \}$ convergent to $0$ and set $\lb_{\epsilon_s}=\lb+\epsilon_s \mu$ and $A_{\epsilon_s}=A_1+\epsilon_s A_2$.
Then $(\lb_{\epsilon_s}, -1, -\epsilon_s)$ is a zero of $L(z)$, and hence $\lb_{\epsilon_s}-A_{\epsilon_s}$ is not invertible. We let $P_s$ be the orthogonal projection from $\C^k$ onto $\ker (\lb_{\epsilon_s}-A_{\epsilon_s})$ and pick any $v_s\in \ker (\lb_{\epsilon_s}-A_{\epsilon_s})$ such that $\|v_s\|=1$.
Since the unit sphere of $\C^k$ is compact, the sequence $\{v_s\}$ contains a convergent subsequence. Without loss of generality, we assume $\{v_s\}$ itself is convergent to $v$. It is clear that $v$ is an eigenvector of $A_1$. To proceed, we observe that $P(\lb-A_1)=(\lb-A_1)P=0$. Therefore, multiplying the equations
\begin{align*}
0=(\lb_{\epsilon_s}-A_{\epsilon_s})v_s=(\lb-A_1)v_s+\epsilon_s(\mu-A_2)v_s
\end{align*}
by $P$ on the left, we obtain $P(\mu-A_2)v_s=0$ for each $s$, and consequently $P(\mu-A_2)v=0$. Since $Pv=v$ and $\|v\|=1$, it follows that
\[\mu-\langle A_2 v, v\rangle=\langle (\mu-A_2)v, Pv\rangle=\langle P(\mu-A_2)v, v\rangle=0.\]
Since $A_2$ is normal and $|\mu|=\|A_2\|$, the Cauchy-Schwarz inequality implies that $A_2v$ is a scalar multiple of $v$, e.g., $v$ is an eigenvector of $A_2$ with corresponding eigenvalue $\mu$.

Since $A_1$ and $A_2$ have a common eigenvector $v$, with respect to the orthogonal decomposition $\C^k=\C v\oplus (\C v)^\perp$, they are of the forms
\[\begin{pmatrix}
\lb & 0\\
0 & A_1'\end{pmatrix}\hspace{6mm} \text{and} \hspace{6mm}  \begin{pmatrix}
\mu & 0\\
0 & A_2' \end{pmatrix},\]
respectively, where $A_1'$ and $A_2'$ are normal $(k-1)\times (k-1)$ matrices whose joint characteristic polynomial is $Q_A(z)/L(z)$. 
Then we may repeat the above arguments for $A_1'$ and $A_2'$ and, after $k-1$ steps, obtain a simultanuous diagonalization of $A_1$ and $A_2$. This concludes the proof.
\end{proof}
\noindent Determining whether two normal matrices commute is often a painstaking work. Theorem \ref{normal} provides an elegant approach. A generalization of this result to compact normal operators was later given in Mao-Qiao-Wang \cite{MQW}.

\part{Applications to Group Representations}

The simplicity and symmetry of Definition \ref{proj} enables us to compute projective spectra in a plethora of examples. A particularly interesting case is when the operators have some algebraic relations, for example, when they generate a group. Consider a finitely generated group $G=\langle g_1, g_2, \cdots, g_n\rangle$ and a unitary representation $\pi$ on a Hilbert space $\HH)$. Since the unit $1$ is a special element in $G$, it makes good sense to consider the pencil $A_\pi(z)=z_0I+z_1\pi(g_1)+\cdots + z_n\pi(g_n)$.
Although projective spectrum $p(A_\pi)$ depends on the choice of the generating set $\{g_1, ..., g_n\}$, it is capable of capturing intrinsic properties of $G$ and $\pi$.
If $\pi$ is a finite dimensional representation, then the determinant $Q_\pi (z)=\det A_\pi (z)$ is called the characteristic polynomial of $G$ with respect to $\pi$, in which case $p(A_\pi)$ is the zero set of $Q_\pi$. Study of $Q_\pi$ for finite groups can be traced back to the work of Dedekind and Frobenius in the late 19th century (\cite{De,Fr}). In fact, their work is the starting point of group representation theory (\cite{Di75}). More recent work on the joint spectrum of groups can be found in \cite{BG,GS}. Since the definition of projective spectrum, some new lines of research on group representations have emerged in \cite{CST,GY17,GY}. The second part of this note concerns mostly with the left regular representation and Koopman representation. Some new results will be reported here. 

\begin{definition} If a locally compact group $G$ has a measure-preserving action on a measure space $(X, \mu)$, then the {\em Koopman representation} $\pi: G\to B\big(L^2(X, \mu)\big)$ is defined by 
$\pi(g)f(x)=f(g^{-1}x),\ x\in X, g\in G$.
\end{definition}
\noindent In the case $X=G$ and $\mu$ is the Haar measure, the Koopman representation is the {\em left regular representation} of $G$, and it is often denoted by $\lb_G$ or simply $\lb$.

\section{Examples and Main Theorems}

We first give three examples that are relevant to the study here.

\begin{example}\label{Hs}
If $1_G$ is the trivial representation of $G$, then $A_{1_G}(z)=z_0+\cdots +z_n$ and hence $p(A_{1_G})$ is the hyperplane \[H_0:=\{z\in \pn\mid z_0+\cdots +z_n=0\}.\]
\end{example}

\begin{example}\label{specD}
The infinite dihedral group $D_{\infty}=\langle a, t \mid a^2=t^2=1\rangle$ is isomorphic to the free product ${\mathbb Z}_2\ast {\mathbb Z}_2$. Although it is probably the simplest nonabelian group, it plays an important role in several areas of mathematics. We consider the pencil $A_\lb(z)=z_0I+z_1\lb(a)+z_2\lb(t)$. It is shown in \cite{GY17} that
\[p(A_\lb)=\bigcup_{-1\leq x\leq 1}\{z\in \mathbb{P}^2\mid z_0^2-z_1^2-z_2^2-2z_1z_2x=0\}.\]
\end{example}

\begin{example}\label{F2}
Consider the free group $F_{n}=\langle g_1, ..., g_{n}\rangle$ with $n\geq 2$. It follows from \cite{BCY} Proposition 3.2 that the pencil 
$A_\lb(z)=z_0I+z_1\lb(g_1)+\cdots +z_{n}\lb(g_n)$ has projective spectrum $p(A_\lb)=\cap _{j=0}^{n}R _{j}$,
where
\begin{equation*}
R _{j}=\{z\in \pn \mid 2|z_{j}|^{2}\leq \|z\|^{2}\},\ \ \ j=0, 1, ..., n.
\end{equation*}
To be more explicit, for the case $n=2$ the spectrum $p(A_\lb)$ is equal to
\begin{align*}\label{F2}
\{|z_0|^2\leq |z_1|^2+ |z_2|^2\}\cap \{|z_1|^2\leq |z_0|^2+ |z_2|^2\} \cap \{|z_2|^2\leq |z_0|^2+ |z_1|^2\}.
\end{align*}
\end{example}

In Section 5, based on the connection between amenability and {\em weak containment}, we prove the following result.
\begin{theorem}\label{main1}
A finitely generated group $G=\langle g_1, \cdots, g_n\rangle$ is amenable if and only if $H_0\subset p(A_\lb)$.
\end{theorem}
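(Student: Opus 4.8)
The plan is to prove Theorem \ref{main1} by connecting the geometry of the projective spectrum $p(A_\lb)$ to the spectral radius of a naturally associated averaging operator, via Kesten's classical criterion for amenability. Recall that Kesten's theorem characterizes amenability of a finitely generated group $G=\langle g_1,\dots,g_n\rangle$ in terms of the left regular representation: $G$ is amenable if and only if the norm of the convolution operator $M=\lb(g_1)+\cdots+\lb(g_n)$ (or a symmetrized version thereof) attains its maximal possible value $n$, equivalently if and only if $1$ lies in the spectrum of the normalized operator $\frac{1}{n}M$ when the generating set is symmetric. The key observation I would exploit is that the hyperplane $H_0=\{z_0+\cdots+z_n=0\}$ from Example \ref{Hs} is precisely the projective spectrum of the trivial representation $1_G$, so the containment $H_0\subset p(A_\lb)$ is a statement about weak containment of $1_G$ in $\lb$.

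First I would make the bridge between weak containment and the projective spectrum explicit. The section reference tells us the proof rests on the connection between amenability and \emph{weak containment}, and the Hulanicki--Reiter theorem states that $G$ is amenable if and only if the trivial representation $1_G$ is weakly contained in the left regular representation $\lb$. So the real content is the equivalence
\[
1_G \prec \lb \quad\Longleftrightarrow\quad H_0=p(A_{1_G})\subset p(A_\lb).
\]
To establish this I would use that weak containment $\pi\prec\rho$ means $\|\pi(f)\|\le\|\rho(f)\|$ for every $f$ in the group algebra $\C[G]$, applied in particular to the pencil element $f=z_0 1+z_1 g_1+\cdots+z_n g_n$. Since the projective spectrum records exactly the noninvertibility locus of $A_\pi(z)=\pi(f)$, noninvertibility of $A_{1_G}(z)$ forcing noninvertibility of $A_\lb(z)$ is the natural spectral shadow of the norm inequality coming from weak containment.

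The forward direction (amenability implies $H_0\subset p(A_\lb)$) I expect to be the cleaner half: if $z\in H_0$, then $A_{1_G}(z)=0$, so the one-dimensional trivial representation sends the pencil to $0$; weak containment $1_G\prec\lb$ should then be leveraged to produce approximate null vectors for $A_\lb(z)$, showing $A_\lb(z)$ is not bounded below and hence not invertible, so $z\in p(A_\lb)$. Concretely, I would take a net of almost-invariant unit vectors $\xi_\alpha$ furnished by amenability (a Fřlner-type or weak-containment approximation), and verify that $\|A_\lb(z)\xi_\alpha\|\to 0$ for $z\in H_0$, using $\sum_i z_i=-z_0$ together with $\lb(g_i)\xi_\alpha\approx\xi_\alpha$.

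The converse, and what I anticipate to be the main obstacle, is showing that $H_0\subset p(A_\lb)$ forces amenability. The difficulty is that $p(A_\lb)$ only records whether $A_\lb(z)$ is invertible in the $C^*$-algebra, whereas amenability via Kesten/Hulanicki is a statement about norms and spectral radii, so I must convert a single containment of an algebraic variety into the sharp spectral estimate $\|\lb(g_1+\cdots+g_n)\|=n$. My plan is to pick a well-chosen point $z\in H_0$ — for instance $z=(-n,1,\dots,1)$ or a real symmetric point adapted to the self-adjoint combination — and argue that $z\in p(A_\lb)$ means $0\in\Sp(-nI+\lb(g_1)+\cdots+\lb(g_n))$, i.e.\ $n$ belongs to the spectrum of the (symmetrized) averaging operator. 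Since $n$ is always the maximal modulus attainable, this membership is exactly Kesten's amenability criterion. The delicate part will be handling the distinction between the spectrum and the approximate point spectrum, and ensuring that a real point of $H_0$ indeed lies in $p(A_\lb)$ rather than merely some complex point, so I would either symmetrize the generating set (replacing $\{g_i\}$ by $\{g_i,g_i^{-1}\}$) or invoke that the relevant operator is self-adjoint so its spectrum and approximate point spectrum coincide, letting Kesten's theorem close the argument.
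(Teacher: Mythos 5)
Your proposal is correct, but your converse direction takes a genuinely different (shorter, less general) route than the paper's. The forward implication coincides with the paper's: amenability gives $1_G\prec\lb$ by Hulanicki--Reiter (Theorem \ref{HR}), and since $p(A_{1_G})=H_0$, monotonicity of projective spectrum under weak containment (Lemma \ref{weakeq}, via Proposition \ref{weakp}) yields $H_0\subset p(A_\lb)$; your F\o lner-vector computation is this argument made concrete. For the converse you evaluate the hypothesis at the single point $(-1,\tfrac1n,\dots,\tfrac1n)\in H_0$, read off $1\in\sigma(M_\lb)$, and invoke Kesten's theorem (Theorem \ref{Kesten}, (b)$\Rightarrow$(a)) --- exactly the corollary the paper records immediately after that theorem. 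The paper instead proves Lemma \ref{spec}: for an \emph{arbitrary} unitary representation $\pi$, $H_0\subset p(A_\pi)$ implies $1_G\prec\pi$. Its proof manufactures almost-invariant vectors by hand: $1\in\sigma(M_\pi)$ gives unit vectors $\xi_k$ with $\|M_\pi\xi_k-\xi_k\|\to 0$, the identity
\[\sum_{i=1}^n\|\pi(g_i)\xi_k-\xi_k\|^2=n\bigl(\|M_\pi\xi_k-\xi_k\|^2+1-\|M_\pi\xi_k\|^2\bigr)\]
upgrades this to almost-invariance under each generator, and a telescoping estimate spreads it over every finite subset of $G$, giving $1_G\prec\pi$; Theorem \ref{HR} then converts this to amenability when $\pi=\lb$. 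In effect, the paper re-proves the hard half of Kesten's theorem in representation-independent form. Your shortcut buys brevity (both halves reduce to quoted theorems); the paper's lemma buys reusability --- it is what powers Proposition \ref{Kazhdan} on the Haagerup property and property (T), where Kesten's theorem, being specific to $\lb$, cannot help.

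Two hedges in your converse are unnecessary, and pursuing them would actually cause trouble. First, there is no issue about whether ``a real point of $H_0$'' lies in $p(A_\lb)$: the hypothesis places all of $H_0$ there, in particular $(-1,\tfrac1n,\dots,\tfrac1n)$. Second, do not symmetrize the generating set: the pencil, hence both $H_0$ and $p(A_\lb)$, depends on the chosen generators, so the hypothesis does not transfer to the symmetrized pencil without further argument; nor is $M_\lb$ self-adjoint when $\{g_1,\dots,g_n\}$ is not symmetric, so the ``spectrum equals approximate point spectrum'' fallback is unavailable. Neither fix is needed: Theorem \ref{Kesten} as stated assumes no symmetry of $\mu$, and its condition (b) concerns the full spectrum, which is precisely what the point $(-1,\tfrac1n,\dots,\tfrac1n)\in p(A_\lb)$ delivers. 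Your instinct that symmetry matters is correct only for the \emph{norm} form of Kesten's criterion: for $F_2$ one has $\|\lb(g_1)+\lb(g_2)\|=\|I+\lb(g_1^{-1}g_2)\|=2$ (the unitary $\lb(g_1^{-1}g_2)$ has full unit-circle spectrum), so the normalized operator has norm $1$ even though $F_2$ is not amenable; meanwhile Example \ref{F2} shows its spectrum is the disk of radius $\tfrac1{\sqrt2}$, so condition (b) fails, as it must. The spectral form you need sidesteps this trap entirely.
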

\noindent Similar descriptions of Haagerup property and Kazhdan's property (T) of groups are also given.

For the infinite dihedral group $D_\infty$, it is known that its Koopman representation $\pi$ is weakly equivalent to its regular representation, and it follows that $p(A_\pi)=p(A_\lb)$ (\cite{GY17}). Moreover,
$\pi$ is {\em self-similar}, and this fact gives rise to a rational map $F_\pi(z)=[2\T(z)z_0:z_1:2\T(z) z_2 +z_1]$ on $\mathbb{P}^2$, where $\tau: \mathbb{P}^2\to \hat{\C}=\C\cup \{\infty\}$ is defined by 
\begin{equation}\label{eq: tauhat}
\tau(z) = \begin{dcases}
      0, & \text{ if } z_0^2-z_1^2-z_2^2=0; \\
      \dfrac{z_0^2-z_1^2-z_2^2}{2z_1z_2}, &  \text{ otherwise. }
    \end{dcases}
\end{equation}
\noindent 
Observe that $z\in p(A_\pi)$ if and only if $\tau(z)\in [-1, 1]$ (Example \ref{specD}). Section 6 studies the {\em Julia set} ${\mathcal J}(F_\pi)$ of the map $F_\pi$ and prove the following striking fact. 
\begin{theorem}\label{main2}
${\mathcal J}(F_\pi)=p(A_\pi).$
\end{theorem}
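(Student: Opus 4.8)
The plan is to exploit the self-similarity encoded in $F_\pi$ by collapsing the two-variable dynamics onto a single-variable map through the function $\tau$. The key observation is that $\tau$ semiconjugates $F_\pi$ to the Chebyshev map $g(w)=2w^2-1$ on $\hat{\C}$, i.e. $\tau\circ F_\pi=g\circ\tau$. Granting this, since $p(A_\pi)=\tau^{-1}([-1,1])$ by Example \ref{specD}, and since the Julia set of the Chebyshev map is the classical interval $\mathcal{J}(g)=[-1,1]$ (writing $w=\cos\theta$ gives $g(\cos\theta)=\cos 2\theta$, so $[-1,1]$ is totally invariant and $g$ is expanding there, while $\hat{\C}\setminus[-1,1]$ is the basin of the superattracting fixed point $\infty$), the theorem reduces to the clean statement $\mathcal{J}(F_\pi)=\tau^{-1}(\mathcal{J}(g))$.

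First I would verify the semiconjugacy by direct computation. Clearing the denominator of $\tau$ in the definition of $F_\pi$ rewrites it as the degree-three self-map
\[F_\pi(z)=\big[z_0(z_0^2-z_1^2-z_2^2):z_1^2z_2:z_2(z_0^2-z_2^2)\big]\]
of $\mathbb{P}^2$. Setting $N=z_0^2-z_1^2-z_2^2$ and using $z_0^2-z_2^2=N+z_1^2$, one factors the numerator of $\tau(F_\pi(z))$ as $(N+z_1^2)(N^2-2z_1^2z_2^2)$ against the denominator $2z_1^2z_2^2(N+z_1^2)$; the common factor cancels to give $\tau(F_\pi(z))=N^2/(2z_1^2z_2^2)-1=2\tau(z)^2-1$, as claimed, and hence $\tau\circ F_\pi^n=g^n\circ\tau$ by induction.

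Next I would establish $\mathcal{J}(F_\pi)\subseteq p(A_\pi)$, equivalently that every $z$ with $\tau(z)\notin[-1,1]$ lies in the Fatou set. On a small neighborhood $U$ of such a $z$, $\tau(U)$ sits compactly in the basin of $\infty$, so $\tau(F_\pi^n(w))=g^n(\tau(w))\to\infty$ uniformly for $w\in U$, forcing the orbits toward the locus $\{z_1z_2=0\}$. Here I would use the geometry of the degree-three form above: the line $\{z_1=0\}$ is pointwise fixed off $\{z_0^2=z_2^2\}$ and is transversally superattracting, since the middle coordinate $z_1^2z_2$ vanishes to second order along it, while $\{z_2=0\}$ is collapsed to the fixed point $[1:0:0]$. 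Tracking the orbits into these attracting loci should yield local uniform convergence of $\{F_\pi^n|_U\}$, hence normality, so $U\subseteq\mathcal{F}(F_\pi)$. I expect this step to be the main obstacle: the relation $\tau\circ F_\pi^n=g^n\circ\tau$ controls only the single $\tau$-direction, whereas normality in $\mathbb{P}^2$ requires controlling the orbit transverse to the $\tau$-fibers, and the claim that the whole region $\{\tau\notin[-1,1]\}$ is drawn into $\{z_1=0\}$ and $[1:0:0]$ must be made rigorous in the presence of indeterminacy points such as $[0:1:0]$ and degenerate fibers of $\tau$.

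Finally, for $p(A_\pi)\subseteq\mathcal{J}(F_\pi)$ I would push normality forward through $\tau$. At any $z\in\mathcal{F}(F_\pi)$ where $\tau$ is a submersion, choose a holomorphic local section $s$ of $\tau$ with $s(\tau(z))=z$; then a locally uniformly convergent subsequence of $F_\pi^n$ near $z$ produces one of $g^n=\tau\circ F_\pi^n\circ s$ near $\tau(z)$, so $\{g^n\}$ is normal at $\tau(z)$, giving $\tau(z)\in\mathcal{F}(g)=\hat{\C}\setminus[-1,1]$ and hence $z\notin p(A_\pi)$. The exceptional points—the critical locus of $\tau$, the indeterminacy points of $F_\pi$, and the boundary fibers $\tau=\pm1$—I would absorb using that $\mathcal{J}(F_\pi)$ is closed and that $p(A_\pi)=\tau^{-1}([-1,1])$ is a real hypersurface with empty interior, so these points lie in the closure of fibers already shown to belong to $\mathcal{J}(F_\pi)$. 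Combining the two inclusions yields $\mathcal{J}(F_\pi)=\tau^{-1}([-1,1])=p(A_\pi)$.
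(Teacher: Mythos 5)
Your overall strategy---semiconjugating $F_\pi$ to the Chebyshev map $T(x)=2x^2-1$ (your $g$) via $\tau$ and reducing to ${\mathcal J}(T)=[-1,1]$ together with $p(A_\pi)=\tau^{-1}([-1,1])$---is exactly the paper's, and your verification of $\tau\circ F_\pi=T\circ\tau$ is correct. The genuine gap is the inclusion ${\mathcal J}(F_\pi)\subseteq p(A_\pi)$, i.e.\ $p^c(A_\pi)\subseteq{\mathcal F}(F_\pi)$, which you yourself flag as ``the main obstacle'' and only sketch. Moreover, the sketch cannot be completed as written, because clearing denominators turns $F_\pi$ into the degree-three map $F$ of (\ref{eq: F})---and for $F$ the theorem is false as stated: by \cite{GY}, ${\mathcal J}(F)=p(A_\pi)\cup E$, where the extended indeterminacy set $E$ is hard to determine and is \emph{not} contained in $p(A_\pi)$ (for instance $[0:1:0]$, which you cite as an indeterminacy point, lies in $I_1(F)\setminus p(A_\pi)$, whereas $E(F_\pi)=I_1(F_\pi)=\{[\pm1:0:1]\}$). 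So a normality argument resting only on the geometry of the degree-three form---attraction to $\{z_1=0\}$ and to $[1:0:0]$---can at best reprove the weaker result of \cite{GY}; the whole point of passing from $F$ to $F_\pi$ is what is at stake here. The ingredient your proposal is missing, and which the paper uses to supply exactly the transverse control you correctly identify as lacking in the semiconjugacy alone, is the closed formula for the iterates made possible by the degree-one homogeneity of $F_\pi$: with $p_n(z)=2^n\prod_{k=0}^{n-1}T^k(\tau(z))$ and $f_n=\sum_{j=1}^{n}1/p_j$, one has
\begin{equation*}
F_\pi^{n}(z)=\Big[z_0:\frac{z_1}{p_n(z)}:z_2+z_1f_n(z)\Big],\qquad z\in p^c(A_\pi).
\end{equation*}
On a compact $K\subset p^c(A_\pi)$ the iterates $T^k(\tau)$ tend to $\infty$ uniformly (Example \ref{Tch}), so $|p_n|\geq 2^n$ eventually, $f_n$ converges uniformly to $f$, and $F_\pi^n$ converges normally to $F_*(z)=[z_0:0:z_2+z_1f(z)]$; this single computation is the proof of the Fatou inclusion.

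Two remarks on your second inclusion $p(A_\pi)\subseteq{\mathcal J}(F_\pi)$. Your idea of pushing normality down through a local holomorphic section $s$ of $\tau$ is genuinely different from the paper's argument (the paper evaluates the explicit iterate formula at $\xi$ with $\tau(\xi)=\cos\theta$, $\theta/\pi$ non-dyadic, observes that the middle coordinate $\xi_1\sin\theta/\sin(2^n\theta)$ stays bounded away from $0$, and contradicts the fact that, by density of $p^c(A_\pi)$, any normal limit must extend $F_*$, whose middle coordinate vanishes), and it is attractive because it needs no formula for the iterates. But as written it has a hole: to conclude that $T^{n_k}=\tau\circ F_\pi^{n_k}\circ s$ converges locally uniformly, you must know that the subsequential limit $h$ of $F_\pi^{n_k}$ avoids the four points $[\pm1:1:0]$, $[\pm1:0:1]$ where $\tau:\mathbb{P}^2\to\hat{\C}$ is discontinuous (there $z_0^2-z_1^2-z_2^2=0=z_1z_2$), and nothing in your argument rules this out; your final absorption step also tacitly requires, and should verify, that the points where $\tau$ is a submersion with value in $(-1,1)$ are dense in $p(A_\pi)$. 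These defects look patchable, but since the first inclusion is unproved, the proposal as a whole does not yet establish the theorem.
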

\noindent This improves the main result in \cite{GY}. It is a rare example of a nontrivial rational map in several variables whose Julia set can be completely described.

\section{Amenability}

The Banach-Tarski paradox asserts that there exists a decomposition of a solid ball in ${\mathbb R}^3$ into several disjoint pieces such that they can be reassembled, by means of rotation and shift, to form two solid balls identical to the original one. The construction of the decomposition is achieved through a decomposition of the free group $F_2$. But lying at the core of this unintuitive phenomenon is the Axiom of Choice which allows for the construction of nonmeasurable sets. The notion of amenability was introduced by von Neumann \cite{Neu} to describe groups that do not give rise to the Banach-Tarski paradox. A finitely generated group $G=\langle g_1, \dots, g_n\rangle$ is said to be {\em amenable} if there exists a {\em $G$-invariant mean}, namely a state on the $C^*$-algebra $L^\infty(G)$ satisfying \[\phi(_gf)=\phi(f),\ \ \ f\in L^\infty (G), g\in G,\] where $_gf(x)=f(g^{-1}x), x\in G$.
Clearly, every compact group is amenable because $\phi$ defined by
\[\phi (f)=\int_G fd\mu,\]where $\mu$ is the normalized Haar measure on $G$, is an invariant mean. Moreover, due to the Markov-Kakutani Theorem, every abelian group is amenable. Amenability has several equivalent characterizations, one of which involves the notion of weak containment of unitary group representations.

\begin{definition}
Consider two representations $(\pi, \HH)$ and $(\rho, \mathcal{K})$ of a discrete group $G$. We say $\pi$ is weakly contained in $\rho$ (denoted by $\pi\prec \rho$) if for every $x\in {\mathcal H}$, every finite subset $F\subset G$, and every $\epsilon>0$, there exist $y_1, \dots, y_n$ in ${\mathcal K}$ such that for all $g\in F$, we have
$|\langle \pi(g)x,\ x\rangle-\sum_{i=1}^{n}\langle \rho(g)y_i,\ y_i\rangle|<\epsilon$.
\end{definition}
\noindent Moreover, two representations $\pi$ and $\rho$ are said to be weakly equivalent if $\pi\prec \rho$ and $\rho\prec \pi$, in which case we write $\pi\sim \rho$. Weak containment provides a partial order on the set of unitary representations of $G$, and there are several equivalent statements. In particular, for finite groups $\pi\prec \rho$ if and only if $\pi$ is contained in $\rho$, i.e., $\pi<\rho$. We refer readers to \cite{BHV, Di69} for details. 

For a unitary representation $(\rho, \HH)$ of $G=\langle g_1, \dots, g_n\rangle$, we let $C^*_{\rho}(G)$ denote the $C^*$-algebra generated by the unitaries $\rho(g_i), 1\leq i\leq n$. 
Let $\mu$ be a positive regular Borel measure on $G$, and let $L^1(G, \mu)$ be the set of integrable complex functions on $G$. For a unitary representation $\pi$, we define
\[\pi(m)=\int_Gm(g)\pi(g)d\mu(g),\]
where the convergence of the integral is with respect to the weak operator topology. Since each $\pi (g)$ is a unitary, we have
\[\|\pi(m)\|\leq \int_G|m(g)|d\mu(g)=\|m\|_1.\] Further, if the the constant function $1\in L^1(G, \mu)$, we write $\pi(1)$ as $\pi(\mu)$ to avoid confusion. Clearly, for a discrete group $G$ with counting measure and an arbitrary element $m=z_1g_1+\cdots+z_kg_k\in \C[G]$, we have $\pi(m)=z_1\pi(g_1)+\cdots+z_k\pi(g_k)$. Other measures on a countable group is also useful.

\begin{example}\label{Markov}
Consider the measure $\mu$ on group $G$ generated by the set $S=\{g_1, ..., g_n\}$ such that $\mu\{g_i\}=\frac{1}{n},\ j=1, \dots, n$, and $\mu\{g\}=0$ for all $g\notin S$. Clearly, $\mu$ is a probability measure that is absolutely continuous with respect to the Haar measure on $G$, and $\mu$'s support is $S$. In this case
\[\pi(\mu)=\int_G\pi (g)d\mu(g)=\frac{1}{n}(\pi(g_1)+\pi(g_2)+\cdots +\pi(g_n)),\]
and it is called the {\em Markov operator} with respect to the generating set $S$ and the representation $\pi$, for which we denote by $M_{\pi}$. Since each $\pi(g_i)$ is a unitary, the triangular inequality implies $\|M_{\pi}\|\leq 1$.
\end{example}
\noindent As we shall soon see, the spectrum of $M_{\pi}$ holds important information about amenability. Weak containment has a useful description in terms of $C^*$-algebras.

\begin{proposition}\label{weakp}
Let $\rho$ and $\pi$ be two unitary representations of a countable group $G$. Then the following statements are equivalent:

(a) $\pi\prec \rho$;

(b) $\sigma(\pi(m))\subset \sigma(\rho(m))$ for each $m\in \ell^1(G)$;

(c) $\|\pi(m)\|\leq \|\rho(m)\|$ for each positive $m\in \C[G]$;

(d) the map $\phi: C^*_\rho(G)\longrightarrow C^*_\pi(G)$ defined by $\phi(\rho(g))=\pi(g),\ g\in G$ extends to a surjective homomorphisim.
\end{proposition}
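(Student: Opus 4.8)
The plan is to close the logical loop $(a)\Rightarrow(c)\Rightarrow(d)\Rightarrow(a)$, and then to graft $(b)$ onto it via $(d)\Rightarrow(b)\Rightarrow(c)$; together these give the equivalence of all four statements. Throughout I write $\HH_\pi$ and $\HH_\rho$ for the two representation spaces, and I use that $\pi$ and $\rho$ extend to $*$-representations of $\C[G]$, and indeed of $\ell^1(G)$, that are contractive for the $\ell^1$-norm (as recorded before Example \ref{Markov}).

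For $(a)\Rightarrow(c)$, take a positive $m=\sum_g m(g)g\in\C[G]$, so that $\pi(m)$ and $\rho(m)$ are positive operators and $\|\pi(m)\|=\sup_{\|x\|=1}\langle\pi(m)x,x\rangle$. Fixing a unit vector $x$ and applying the definition of $\pi\prec\rho$ to the finite set $F=\{g: m(g)\ne 0\}\cup\{e\}$ and a small $\epsilon$, I obtain $y_1,\dots,y_n\in\HH_\rho$ with $\langle\pi(g)x,x\rangle$ within $\epsilon$ of $\sum_i\langle\rho(g)y_i,y_i\rangle$ for each $g\in F$. The key bookkeeping point is that including $e$ in $F$ forces $\sum_i\|y_i\|^2$ to be close to $1$, so that $\langle\pi(m)x,x\rangle\le\sum_i\langle\rho(m)y_i,y_i\rangle+O(\epsilon)\le\|\rho(m)\|\sum_i\|y_i\|^2+O(\epsilon)$; letting $\epsilon\to0$ and taking the supremum over $x$ gives $\|\pi(m)\|\le\|\rho(m)\|$.

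For $(c)\Rightarrow(d)$, I first upgrade (c) to all of $\C[G]$ through the $C^*$-identity $\|\pi(m)\|^2=\|\pi(m^*m)\|\le\|\rho(m^*m)\|=\|\rho(m)\|^2$, valid since $m^*m$ is positive. This inequality shows that $\rho(m)\mapsto\pi(m)$ is a well-defined contractive $*$-homomorphism on the dense subalgebra $\rho(\C[G])$, hence extends continuously to a $*$-homomorphism $\phi:C^*_\rho(G)\to C^*_\pi(G)$; its range is a $C^*$-algebra containing every generator $\pi(g)$ and so equals $C^*_\pi(G)$, giving surjectivity. The two auxiliary implications are quick: for $(d)\Rightarrow(b)$, note $\rho(m)\in C^*_\rho(G)$ and $\phi(\rho(m))=\pi(m)$ for every $m\in\ell^1(G)$ by continuity, and a unital $*$-homomorphism never enlarges the spectrum, so $\sigma(\pi(m))\subseteq\sigma(\rho(m))$ once spectral permanence (inversion-closedness) is invoked to compute spectra inside the $C^*$-algebras; for $(b)\Rightarrow(c)$, a positive $m$ makes $\pi(m),\rho(m)$ positive operators whose norms are the maxima of their spectra, so the spectral inclusion immediately yields the norm inequality.

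The crux is $(d)\Rightarrow(a)$, which I expect to be the main obstacle. Given $\phi$, a unit vector $x\in\HH_\pi$ defines a state $\omega_x\circ\phi$ on $C^*_\rho(G)$ whose value at $\rho(g)$ is exactly $\langle\pi(g)x,x\rangle$, so (a) reduces to the lemma that \emph{on the concrete unital $C^*$-algebra $C^*_\rho(G)\subseteq B(\HH_\rho)$ the convex combinations of vector states $S\mapsto\sum_i\langle Sy_i,y_i\rangle$, with $\sum_i\|y_i\|^2=1$, are weak$^*$-dense in the state space.} I would establish this by Hahn--Banach separation: if some state $\psi_0$ lay outside the weak$^*$-closed convex hull of vector states, there would be a self-adjoint $a\in C^*_\rho(G)$ with $\psi_0(a)>\alpha\ge\langle ay,y\rangle$ for all unit $y$; but $\sup_{\|y\|=1}\langle ay,y\rangle=\max\sigma(a)$, while spectral permanence forces $\psi_0(a)\le\max\sigma(a)$ for any state, a contradiction. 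Granting the lemma, weak$^*$-approximating $\omega_x\circ\phi$ on the finite set $\{\rho(g):g\in F\}$ produces exactly the vectors $y_i$ demanded by the definition of $\pi\prec\rho$, closing the cycle.
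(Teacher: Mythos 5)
Your proof is correct, but there is nothing in the paper to compare it against: the paper states Proposition \ref{weakp} without proof, treating it as standard and pointing the reader to \cite{BHV} and \cite{Di69} (it is essentially the $C^*$-algebraic characterization of weak containment in the appendix of \cite{BHV}). What you have written is a correct, self-contained rendering of that standard argument. The cycle $(a)\Rightarrow(c)\Rightarrow(d)\Rightarrow(a)$ together with $(d)\Rightarrow(b)\Rightarrow(c)$ does give all four equivalences, and each leg checks out: in $(a)\Rightarrow(c)$, putting $e$ in the finite set is exactly what pins $\sum_i\|y_i\|^2$ near $1$ and makes the estimate close; in $(c)\Rightarrow(d)$, the $C^*$-identity upgrade to all of $\C[G]$ gives well-definedness and contractivity simultaneously, and surjectivity follows because the image of a $*$-homomorphism of $C^*$-algebras is closed; and your Hahn--Banach separation proof of the key lemma (weak$^*$-density of convex combinations of vector states in the state space of a unital concrete $C^*$-algebra) is the classical route, modulo the elided reduction that the separating element may be taken self-adjoint because states are hermitian functionals. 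Two points you should make explicit. First, in (d) ``homomorphism'' must be read as $*$-homomorphism, which is the standard reading and is what your $(c)\Rightarrow(d)$ construction actually produces; your crucial step $(d)\Rightarrow(a)$ uses positivity of $\phi$ to conclude that $\omega_x\circ\phi$ is a state, and a bare surjective algebra homomorphism is not obviously positive or even continuous (salvaging the weaker reading would require Johnson's automatic continuity theorem, a much deeper fact). Second, at the end of the separation argument it is positivity of the state, not spectral permanence, that gives $\psi_0(a)\le\max\sigma_{C^*_\rho(G)}(a)$; spectral permanence (inversion-closedness of unital $C^*$-subalgebras, which the paper records in Section 2) is what identifies this algebra spectrum with the operator spectrum, and hence with $\sup_{\|y\|=1}\langle ay,y\rangle$. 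These are presentational tightenings, not gaps.
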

The following theorem is \cite{BHV} Theorem G 3.2.
\begin{theorem}[Hulanicki-Reiter]\label{HR} Let $G$ be a locally compact group. The following properties are equivalent:

(i) $G$ is amenable;

(ii) ${1}_G \prec \lb$;

(iii) $\pi \prec \lb$ for every unitary representation $\pi$ of $G$.
\end{theorem}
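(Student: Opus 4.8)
The statement is the classical Hulanicki--Reiter theorem, and I would organize the proof around the cycle (iii) $\Rightarrow$ (ii) $\Rightarrow$ (i) $\Rightarrow$ (ii) $\Rightarrow$ (iii), with the genuinely substantive content lying in the equivalence (i) $\Leftrightarrow$ (ii) between the existence of an invariant mean and the presence of almost invariant vectors in $\ell^2(G)$. The implication (iii) $\Rightarrow$ (ii) is immediate upon taking $\pi=1_G$, so the work splits into two essentially independent pieces: a tensoring/absorption argument for (ii) $\Rightarrow$ (iii), and a mean-versus-vectors argument for (i) $\Leftrightarrow$ (ii).

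For (ii) $\Rightarrow$ (iii), the plan is to invoke Fell's absorption principle, which asserts that for any unitary representation $\pi$ the tensor product $\pi\otimes\lb$ is unitarily equivalent to a multiple of the left regular representation; since a multiple of $\lb$ is weakly equivalent to $\lb$, this gives $\pi\otimes\lb\sim\lb$. I would combine this with the elementary fact that weak containment is stable under tensoring by a fixed representation: from $1_G\prec\lb$ one deduces $\pi\otimes 1_G\prec\pi\otimes\lb$. Since $\pi\otimes 1_G\cong\pi$, chaining these relations yields $\pi\prec\pi\otimes\lb\sim\lb$, which is (iii). The stability of $\prec$ under tensoring can be verified directly from the matrix-coefficient definition of weak containment, or transported through the surjective $C^*$-homomorphism supplied by Proposition \ref{weakp}(d).

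The core is (i) $\Leftrightarrow$ (ii). For (ii) $\Rightarrow$ (i), the almost invariant unit vectors $\xi_k\in\ell^2(G)$ encoded by $1_G\prec\lb$ produce probability densities $f_k=|\xi_k|^2\in\ell^1(G)$ that are approximately translation invariant, via the elementary estimate $\|{}_gf_k-f_k\|_1\le 2\|\lb(g)\xi_k-\xi_k\|_2$. Each $f_k$ defines a state $\phi_k$ on $L^\infty(G)$ by integration, and any weak-$*$ cluster point $\phi$ of $\{\phi_k\}$ is $G$-invariant, because $|\phi_k({}_gF)-\phi_k(F)|\le\|F\|_\infty\,\|{}_gf_k-f_k\|_1\to 0$; thus $\phi$ is an invariant mean and $G$ is amenable. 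The reverse implication (i) $\Rightarrow$ (ii) starts from an invariant mean, realized as a weak-$*$ limit of finitely supported probability densities, and must upgrade this weak-$*$ almost invariance to norm almost invariance in $\ell^1(G)$ (Reiter's condition); taking square roots then produces $\ell^2$ almost invariant vectors, delivering $1_G\prec\lb$.

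I expect the main obstacle to be precisely this last upgrade in (i) $\Rightarrow$ (ii): a priori an invariant mean only yields densities that are almost invariant in the weak topology of $\ell^1(G)$, not in norm. The standard device is the Day--Namioka convexity argument, in which one considers, for a fixed finite set $\{g_1,\dots,g_k\}\subset G$, the convex set $\{({}_{g_1}f-f,\dots,{}_{g_k}f-f)\}$ inside the product $\bigoplus_{j=1}^k\ell^1(G)$ and exploits the coincidence of its weak and norm closures to replace weak almost invariance by genuine norm almost invariance. Packaging this cleanly, together with the passage between $\ell^1$ and $\ell^2$ almost invariance through the inequality $\|f^{1/2}-h^{1/2}\|_2^2\le\|f-h\|_1$ valid for nonnegative $f,h\in\ell^1(G)$, is where the real care is required; the remaining implications are formal.
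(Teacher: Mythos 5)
The paper offers no proof of this theorem: it is quoted as Theorem G~3.2 of \cite{BHV}, so there is nothing internal to compare your argument against; the relevant comparison is with the standard proof in that reference, and your outline is essentially that proof, correctly organized. The implications you call formal are indeed formal: (iii)$\Rightarrow$(ii) is specialization, and (ii)$\Rightarrow$(iii) follows from Fell's absorption principle $\pi\otimes\lambda\cong(\dim \mathcal{H}_\pi)\,\lambda\sim\lambda$ together with stability of $\prec$ under tensoring, both checkable on matrix coefficients. The core equivalence (i)$\Leftrightarrow$(ii) --- densities $f_k=|\xi_k|^2$, the estimate $\|{}_gf_k-f_k\|_1\le 2\|\lambda(g)\xi_k-\xi_k\|_2$, a weak-$*$ cluster point for one direction; Day--Namioka convexity to upgrade weak to norm almost invariance, then $\|f^{1/2}-h^{1/2}\|_2^2\le\|f-h\|_1$ for the other --- is the classical argument and is sound. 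Two points should be made explicit in a full write-up. First, the paper's definition of weak containment approximates $\langle 1_G(g)x,x\rangle=1$ by \emph{finite sums} $\sum_i\langle\lambda(g)y_i,y_i\rangle$, whereas your step (ii)$\Rightarrow$(i) starts from \emph{single} almost-invariant unit vectors $\xi_k$; the equivalence of these formulations for the trivial representation is a genuine (if short) lemma --- normalize at $g=e$, regard $(y_1,\dots,y_n)$ as an almost-invariant vector for the multiple $n\lambda$, and extract a good component by summing the defect over the finite set $F$ --- and without it your $f_k$ are not yet in hand. Second, your argument is written for discrete $G$ ($\ell^1$, $\ell^2$, finite subsets), while the statement concerns locally compact $G$; the identical scheme runs with Haar measure, $L^1$-densities, and compact sets, and the discrete case is all the paper actually uses (Theorem \ref{main1}), but the discrepancy is worth acknowledging.
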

\noindent In other words, the group $G$ is amenable if and only if its left regular representation is maximal with respect to weak containment. A spectral characterization of amenability was given by Kesten \cite{Ke1}.

\begin{theorem}\label{Kesten}
Let $G$ be a locally compact group and $\mu$ be a probability measure on $G$. Assume that $\mu$ is absolutely continuous with respect to the Haar measure on $G$ and $supp(\mu)$ generates a dense subgroup of $G$. Then the following are equivalent:

(a) $G$ is amenable.

(b) $1\in \sigma(\lambda(\mu))$.

(c) The spectral radius of $\lambda(\mu)$ is $1$.
\end{theorem}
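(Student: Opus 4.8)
The plan is to prove the three statements equivalent through the cycle (a)$\Rightarrow$(b)$\Rightarrow$(c)$\Rightarrow$(a), throughout exploiting two facts. First, since $\mu$ is a probability measure absolutely continuous with density $f=d\mu/dg\geq 0$ and $\int_G f\,dg=1$, the operator $\lb(\mu)=\int_G f(g)\lb(g)\,dg$ satisfies $\|\lb(\mu)\|\leq\int_G f\,dg=1$; consequently $\sigma(\lb(\mu))$ lies in the closed unit disk and its spectral radius is at most $1$. Second, by Theorem \ref{HR}, amenability of $G$ is equivalent to $1_G\prec\lb$, which is precisely the statement that $\lb$ admits \emph{almost invariant vectors}: unit vectors $\eta_k\in L^2(G)$ with $\|\lb(g)\eta_k-\eta_k\|\to 0$ uniformly on compact subsets of $G$. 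Thus the task reduces to tying the presence of such vectors to the spectral behavior of $\lb(\mu)$.

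For (a)$\Rightarrow$(b) I would start from almost invariant vectors $\eta_k$ supplied by Theorem \ref{HR}. Writing $\langle\lb(\mu)\eta_k,\eta_k\rangle=\int_G f(g)\langle\lb(g)\eta_k,\eta_k\rangle\,dg$ and noting that $\langle\lb(g)\eta_k,\eta_k\rangle\to 1$ pointwise with $|\langle\lb(g)\eta_k,\eta_k\rangle|\leq 1$, dominated convergence against $f\in L^1$ gives $\langle\lb(\mu)\eta_k,\eta_k\rangle\to 1$. Since $\|\lb(\mu)\|\leq 1$, the estimate $\|\lb(\mu)\eta_k-\eta_k\|^2\leq 2-2\re\langle\lb(\mu)\eta_k,\eta_k\rangle\to 0$ shows $1$ is an approximate eigenvalue of $\lb(\mu)$, hence $1\in\sigma(\lb(\mu))$. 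The implication (b)$\Rightarrow$(c) is then immediate: $1\in\sigma(\lb(\mu))$ forces the spectral radius to be at least $1$, and the reverse bound was noted above.

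The substance is in (c)$\Rightarrow$(a). If the spectral radius equals $1$, compactness of the spectrum yields $z_0\in\sigma(\lb(\mu))$ with $|z_0|=1=\|\lb(\mu)\|$; being of maximal modulus, $z_0$ is a boundary point of $\sigma(\lb(\mu))$ and therefore lies in the approximate point spectrum, so there are unit vectors $\xi_k$ with $\lb(\mu)\xi_k-z_0\xi_k\to 0$. The key computation is $\int_G f(g)\,\|\lb(g)\xi_k-z_0\xi_k\|^2\,dg=2-2\re\big(\bar z_0\langle\lb(\mu)\xi_k,\xi_k\rangle\big)\to 0$, in which positivity of the integrand is essential. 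Passing to $\eta_k=|\xi_k|$ and using the pointwise reverse triangle inequality $\big||\xi_k(g^{-1}x)|-|\xi_k(x)|\big|\leq|\xi_k(g^{-1}x)-z_0\xi_k(x)|$ gives $\int_G f(g)\,\|\lb(g)\eta_k-\eta_k\|^2\,dg\to 0$ with $\|\eta_k\|=1$, thereby stripping off the phase $z_0$. Finally I would promote this averaged almost invariance to genuine almost invariant vectors over all of $G$ and invoke Theorem \ref{HR} to conclude amenability.

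The main obstacle is precisely this last promotion. The relation $\int_G f\,\|\lb(g)\eta_k-\eta_k\|^2\,dg\to 0$ only controls the invariance defect in $L^1(\mu)$-mean over $\operatorname{supp}(\mu)$; to produce vectors that are almost invariant uniformly on every compact set one must extract an almost-everywhere convergent subsequence, exploit the subadditivity $\|\lb(gh)\eta-\eta\|\leq\|\lb(g)\eta-\eta\|+\|\lb(h)\eta-\eta\|$ together with unitarity to spread invariance across the subgroup generated by $\operatorname{supp}(\mu)$, and then use the density of that subgroup and the strong continuity of $g\mapsto\lb(g)\eta$ to reach the entire group. Managing this measure-theoretic and topological passage, rather than any single algebraic identity, is the technical heart of the argument, and it is exactly where the hypotheses that $\mu$ is absolutely continuous and that $\operatorname{supp}(\mu)$ generates a dense subgroup are indispensable.
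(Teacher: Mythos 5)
The paper itself gives no proof of Theorem \ref{Kesten}: it is quoted from Kesten's work, with \cite{BHV} as the background reference, so your proposal has to stand on its own. Most of it does: the cycle (a)$\Rightarrow$(b)$\Rightarrow$(c)$\Rightarrow$(a), the estimate $\|\lb(\mu)\eta_k-\eta_k\|^2\le 2-2\re\langle\lb(\mu)\eta_k,\eta_k\rangle$, the fact that a modulus-one spectral point of a contraction is an approximate eigenvalue, the identity $\int_G f(g)\|\lb(g)\xi_k-z_0\xi_k\|^2dg=2-2\re\big(\bar z_0\langle\lb(\mu)\xi_k,\xi_k\rangle\big)$, and the phase-stripping $\eta_k=|\xi_k|$ are all correct and are exactly the standard ingredients (modulo choosing the $\eta_k$ in (a)$\Rightarrow$(b) along compact sets $K_k$ with $\mu(K_k)\to 1$, which inner regularity provides).

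The genuine gap is the one step you defer, and it is not a routine ``measure-theoretic and topological passage'': pointwise asymptotic invariance on a dense subgroup \emph{cannot} be upgraded to almost invariance uniform on compact sets by ``density plus strong continuity of $g\mapsto\lb(g)\eta_k$''. Continuity holds for each fixed $k$, but there is no equicontinuity in $k$, and the inference you invoke is false for unitary representations in general. Concretely, on $G=\mathbb{R}$ let $\pi=\oplus_{k\ge1}\chi_{k!}$ act on $\ell^2$, where $\chi_t(x)=e^{2\pi itx}$, and let $\eta_k$ be the $k$-th basis vector. Then $\|\pi(x)\eta_k-\eta_k\|=|e^{2\pi ik!x}-1|\to 0$ for every $x$ in the dense subgroup $\mathbb{Q}$ (since $k!x\in\z$ for large $k$), each orbit map is strongly continuous, yet $1_{\mathbb{R}}\not\prec\pi$ and no subsequence of the $\eta_k$ is almost invariant on $[0,1]$. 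Since your final step uses nothing about the regular representation or the positivity of the $\eta_k$ at that point, it would ``prove'' a false statement; the argument as written is therefore incomplete exactly at its heart. (Two smaller slips in the same paragraph: a.e.\ convergence gives you invariance on the subgroup generated by a full-measure set $D$, not by $\mathrm{supp}\,\mu$ itself --- this is harmless, because any open set meeting $\mathrm{supp}\,\mu$ has positive measure and hence meets $D$, so $\langle D\rangle$ is still dense; and absolute continuity of $\mu$, which you declare indispensable, is in fact never used anywhere in your argument.)

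What rescues the implication is the structure you created but did not exploit: the $\eta_k$ are \emph{nonnegative} unit vectors in $L^2(G)$. Pass to the probability densities $F_k=\eta_k^2\in L^1(G)$; then
\[
\|\lb(g)F_k-F_k\|_1\le\|\lb(g)\eta_k-\eta_k\|_2\,\|\lb(g)\eta_k+\eta_k\|_2\le 2\|\lb(g)\eta_k-\eta_k\|_2,
\]
so the $F_k$ are asymptotically invariant in $L^1$-norm for every $g$ in the dense subgroup $H=\langle D\rangle$. A weak-$*$ cluster point of $(F_k)$ in $(L^\infty(G))^*$ is then an $H$-invariant mean; restricting it to the left uniformly continuous bounded functions, on which $g\mapsto{}_g\phi$ is norm-continuous, the $H$-invariance extends to $G$-invariance by density, and the existence of an invariant mean on that function space is equivalent to amenability for locally compact groups. (Alternatively one can smooth by convolving with the density of $\mu$ to restore equicontinuity --- this is where absolute continuity genuinely enters.) Some mechanism of this kind --- invariant means on uniformly continuous functions, Reiter's property, or convolution smoothing --- is unavoidable, and without it (c)$\Rightarrow$(a) is not established.
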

\noindent If $\mu$ is the probability measure in Example \ref{Markov}, then $\lb(\mu)=M_\lb$. Hence a finitely generated group $G$ is amenable if and only if $1\in \sigma(M_{\lambda})$.

\subsection{Throught the Lense of Projective Spectrum}

Two representations $(\pi, {\mathcal H})$ and $(\rho, {\mathcal K})$ are said to be equivalent if there is a unitary map $U:\ {\mathcal H}\to {\mathcal K}$ such that $\rho(g)=U\pi(g)U^{-1}, g\in G$. It is obvious that in this case for any elements $g_1, ..., g_n\in G$, we have $A_{\pi}(z)=UA_{\rho}(z)U^{-1}$ and hence $p(A_{\pi})=p(A_{\rho})$. This indicates that projective spectrum is a unitary invariant for group representations. Moreover, if $\pi\prec \rho$, then Proposition \ref{weakp} (d) implies that, for each $m\in \ell^1(G)$, if $\rho(m)$ is invertible in $C^*_{\rho}(G)$ then $\pi(m)$ is invertible in $C^*_{\pi}(G)$. In particular, for $m(z)=z_01+z_1g_1+ \cdots +z_ng_n\in \C[G]$, we have $A_\pi(z)=\pi(m(z))$ and $A_\rho(z)=\rho(m(z))$. The following is thus immediate.
\begin{lemma}\label{weakeq}
Consider two unitary represntations $\pi$ and $\rho$ of group $G=\langle g_1, ..., g_n\rangle$. If $\pi\prec \rho$, then $p(A_{\pi})\subset p(A_{\rho})$.
\end{lemma}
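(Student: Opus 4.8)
The plan is to read Lemma \ref{weakeq} as a direct corollary of the $C^*$-algebraic reformulation of weak containment in Proposition \ref{weakp} (d), exactly as the preceding paragraph anticipates. The guiding idea is that the projective spectrum $p(A_\pi)$ is nothing but the set of $z\in\pn$ at which the image $\pi(m(z))$ of the fixed element $m(z)=z_01+z_1g_1+\cdots+z_ng_n\in\C[G]$ fails to be invertible in $C^*_\pi(G)$, and similarly for $\rho$. Since a homomorphism transports invertibility forward, a homomorphism $C^*_\rho(G)\to C^*_\pi(G)$ carrying the $\rho$-pencil to the $\pi$-pencil will force the non-invertibility locus to grow, which is precisely the asserted inclusion.

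First I would invoke Proposition \ref{weakp} (d) to obtain, from the hypothesis $\pi\prec\rho$, a surjective $*$-homomorphism $\phi:C^*_\rho(G)\to C^*_\pi(G)$ with $\phi(\rho(g))=\pi(g)$ for every $g\in G$. Next I would record that $\phi$ is unital: writing $I=\rho(g_1)\rho(g_1)^*$ and using that $\phi$ is a $*$-homomorphism gives $\phi(I)=\pi(g_1)\pi(g_1)^*=I$. Consequently $\phi$ carries invertible elements to invertible elements, since applying $\phi$ to the relations $uu^{-1}=u^{-1}u=I$ yields $\phi(u)\phi(u^{-1})=\phi(u^{-1})\phi(u)=I$.

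The third step is the computation $\phi(A_\rho(z))=A_\pi(z)$. By $\C$-linearity of $\phi$ applied to the finite combination $A_\rho(z)=z_0I+z_1\rho(g_1)+\cdots+z_n\rho(g_n)$, together with the defining property $\phi(\rho(g_i))=\pi(g_i)$ and $\phi(I)=I$, one gets $\phi(A_\rho(z))=z_0I+z_1\pi(g_1)+\cdots+z_n\pi(g_n)=A_\pi(z)$. Finally I would argue by contraposition: if $z\notin p(A_\rho)$, then $A_\rho(z)$ is invertible in $C^*_\rho(G)$, so $A_\pi(z)=\phi(A_\rho(z))$ is invertible in $C^*_\pi(G)$ by the second step, whence $z\notin p(A_\pi)$. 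This gives $p(A_\pi)\subset p(A_\rho)$.

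There is no genuine obstacle here once Proposition \ref{weakp} (d) is in hand; the statement is essentially immediate, as the surrounding text already signals. The only points that require care are bookkeeping rather than substance: one must interpret \emph{invertible} inside the correct algebra ($C^*_\pi(G)$ for $\pi$ and $C^*_\rho(G)$ for $\rho$) and confirm that the projective spectra in the lemma are computed with respect to these $C^*$-algebras, and one must not overlook the verification $\phi(I)=I$, which is what guarantees that the full linear pencil, and not merely its generators, is transported correctly.
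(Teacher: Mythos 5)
Your proof is correct and takes essentially the same route as the paper: the paper likewise deduces the lemma directly from Proposition \ref{weakp} (d), observing that the resulting surjective homomorphism $\phi: C^*_\rho(G)\to C^*_\pi(G)$ carries $A_\rho(z)=\rho(m(z))$ to $A_\pi(z)=\pi(m(z))$ and transports invertibility, so the inclusion follows by contraposition. Your extra verifications (unitality of $\phi$, and that invertibility in the $C^*$-algebras agrees with invertibility of the operators, i.e.\ inversion-closedness of unital $C^*$-algebras, a fact the paper already cites) are just the bookkeeping the paper leaves implicit when it declares the lemma ``immediate.''
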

\noindent It follows that if $\pi \sim \rho$ then $p(A_{\pi})= p(A_{\rho})$. In other words, the projective spectrum $p(A_\pi)$ is also an invariant of $\pi$ with respect to weak equivalence. However, the converse of Lemma \ref{weakeq} is not true.

\begin{example}\label{irrin}
We now consider the group $G=GL_3(\mathbb{Z}/3\mathbb{Z})$ which admits the presentation
\[G=\langle g_1, g_2, g_3 \mid g_1^2=(g_1g_2^{-1})^2=(g_1g_3^{-1})^2=g_2^2g_3g_2^{-1}g_3=g_2g_3^2g_2g_3^{-1}=1\rangle.\]
If we set 
\[A_1=\left(\begin{matrix}
-\frac{1}{\sqrt{2}} & -\frac{1}{2}-\frac{i}{2}\\
 -\frac{1}{2}+\frac{i}{2} & \frac{1}{\sqrt{2}}
 \end{matrix}\right),\ \ 
 A_2=\left(\begin{matrix}
\frac{1}{2}+\frac{i}{2} & \frac{1}{\sqrt{2}}\\
 -\frac{1}{\sqrt{2}} & \frac{1}{2}-\frac{i}{2}
 \end{matrix}\right),\ \ 
A_3=\left(\begin{matrix}
\frac{1}{2}-\frac{i}{2} & \frac{i}{\sqrt{2}}\\
 \frac{i}{\sqrt{2}} & \frac{1}{2}+\frac{i}{2}
\end{matrix}\right),\]
then the maps \[\rho_{\pm}(g_1)=\pm A_1,\ \ \rho_{\pm}(g_2)=A_2,\ \ \rho_{\pm}(g_3)=A_3\]
extend to two unitary representations $\rho_+, \rho_{-}: G\to U_2$ (Klep-Vol\v{c}i\v{c} \cite{KV}). Since $\rho_+(\C[G])=M_2(\C)=\rho_{-}(\C[G])$, both representations are irreducible. Note that for finite groups, weak equivalence coincides with equivalence. If there were a unitary $U\in U_2$ such that $U\rho_+(g)U^*=\rho_{-}(g),\ g\in G$, then the linear transformation $T: M_2(\C)\to M_2(\C)$ defined by $T(M)=UMU^*$ would have eigenvalue $1$ with corresponding eigenvectors $I$, $A_2$ and $A_3$; and it would have eigenvalue $-1$ with corresponding eigenvectors $A_1$, $A_1A_2$ and $A_1A_3$, which is impossible because $\dim M_2(\C)=4$. This verifies the inequivalence of $\rho_+$ and $\rho_{-}$. However, we have \[Q_{\rho_+}(z)=Q_{\rho_{-}}(z)=z_0^2+z_0(z_2+z_3)-z_1^2+z_2^2+z_3^2.\]
\end{example}

Nevetheless, the converse of Lemma \ref{weakeq} holds when $\pi$ is th trivial representation. The method of proof can be found in \cite{BHV} Appendix.

\begin{lemma}\label{spec}
Suppose group $G$ is generated by a finite set $S$ and $\pi$ is a unitary representation. Then $1_G \prec \pi$ if and only if $H_0\subset p(A_\pi)$. 
\end{lemma}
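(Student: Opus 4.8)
The plan is to prove the two implications separately; the forward one is essentially immediate, while the reverse one carries all the content. For the forward direction, suppose $1_G\prec\pi$. Lemma \ref{weakeq} then gives $p(A_{1_G})\subset p(A_\pi)$, and by Example \ref{Hs} the left-hand side is exactly $H_0$, so $H_0\subset p(A_\pi)$. No further work is needed here.

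For the reverse direction, the key observation is that membership of the single point $(n,-1,\dots,-1)\in H_0$ in $p(A_\pi)$ already suffices. At this point $A_\pi=nI-\sum_{i=1}^n\pi(g_i)=n(I-M_\pi)$, where $M_\pi$ is the Markov operator of Example \ref{Markov}; hence $H_0\subset p(A_\pi)$ forces $I-M_\pi$ to be non-invertible, i.e. $1\in\sigma(M_\pi)$. Since each $\pi(g_i)$ is unitary we have $\|M_\pi\|\le 1$, so $\sigma(M_\pi)\subset\overline{\mathbb{D}}$, and as $1$ lies on the unit circle it is a boundary point of $\sigma(M_\pi)$. Invoking the standard fact that the topological boundary of the spectrum is contained in the approximate point spectrum, I conclude $1\in\sigma_{ap}(M_\pi)$; that is, there exist unit vectors $\xi_k\in\HH$ with $\|M_\pi\xi_k-\xi_k\|\to 0$.

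From these approximate eigenvectors I would manufacture almost invariant vectors for $\pi$. The identity
\[\sum_{i=1}^n\|\pi(g_i)\xi_k-\xi_k\|^2=2n\bigl(1-\re\langle M_\pi\xi_k,\xi_k\rangle\bigr),\]
which uses only that the $\pi(g_i)$ are unitary, together with $\langle M_\pi\xi_k,\xi_k\rangle\to 1$, shows $\|\pi(g_i)\xi_k-\xi_k\|\to 0$ for every generator $g_i$. A routine word-length estimate $\|\pi(gh)\xi-\xi\|\le\|\pi(h)\xi-\xi\|+\|\pi(g)\xi-\xi\|$ then propagates near-invariance from the generators to each fixed $g\in G$. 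Feeding the unit vector $\xi_k$ (for large $k$) into the definition of weak containment against the normalized cyclic vector of $1_G$ yields $|1-\langle\pi(g)\xi_k,\xi_k\rangle|\le\|\pi(g)\xi_k-\xi_k\|<\epsilon$ for all $g$ in any prescribed finite set, which is precisely $1_G\prec\pi$.

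The step I expect to be the main obstacle is the passage $1\in\sigma(M_\pi)\Rightarrow 1\in\sigma_{ap}(M_\pi)$: a priori the value $1$ could sit in the residual part of the spectrum, so one genuinely needs the boundary-of-spectrum argument, or equivalently a reduction to the self-adjoint operator $\re M_\pi=\frac{1}{2n}\sum_i\bigl(\pi(g_i)+\pi(g_i)^{-1}\bigr)$, for which $1=\max\sigma$ is automatically an approximate eigenvalue because $2n(I-\re M_\pi)=\sum_i(I-\pi(g_i))(I-\pi(g_i))^*\ge 0$. Everything else is bookkeeping. It is worth noting that although the hypothesis supplies all of $H_0$, the converse argument consumes only one point of it; the full hyperplane is what the forward direction produces.
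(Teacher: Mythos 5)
Your proof is correct and follows essentially the same route as the paper: the same point of $H_0$ (your $(n,-1,\dots,-1)$ is projectively the paper's $(-1,\tfrac{1}{n},\dots,\tfrac{1}{n})$), the Markov operator $M_\pi$, the identity $\sum_{i=1}^n\|\pi(g_i)\xi_k-\xi_k\|^2=2n\bigl(1-\re\langle M_\pi\xi_k,\xi_k\rangle\bigr)$, and the word-length propagation from generators to arbitrary finite subsets of $G$. The only divergence is how you pass from $1\in\sigma(M_\pi)$ to approximate eigenvectors: the paper asserts that either $M_\pi-I$ or its adjoint fails to be bounded below and takes the former ``without loss of generality'' (which silently uses the symmetry $\|\pi(g_i^{-1})\xi-\xi\|=\|\pi(g_i)\xi-\xi\|$), whereas your boundary-of-spectrum argument (or the reduction to the self-adjoint $\re M_\pi$) settles the same point explicitly and arguably more cleanly.
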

\begin{proof}
The necessity has been observed in Lemma \ref{weakeq}. For the other direction, we assume $S=\{g_1, ..., g_n\}$. Since $H_0\subset p(A_\pi)$, we have 
$(-1, \frac{1}{n}, ..., \frac{1}{n})\in p(A_\pi)$, i.e., $1\in \sigma(M_{\pi})$, where $M_\pi$ is the associated Markov operator. Thus, either $M_{\pi}$ or $M^*_{\pi}$ is not bounded below. Without loss of generality, we assume the former occurs. Then there exists a sequence of unit vectors $\xi_k\in \HH$ such that $\|M_\pi \xi_k-\xi_k\|\to 0$ (and hence $\|M_\pi \xi_k\|\to 1$). It follows that
\begin{align*}
\sum_{i=1}^n\|\pi (g_i) \xi_k - \xi_k\|^2&=n(2-2\re \langle M_\pi \xi_k, \xi_k\rangle)\\
&=n(\|M_\pi \xi_k-\xi_k\|^2+1-\|M_\pi \xi_k\|^2)\to 0,\end{align*}
which implies $\|\pi (g_i) \xi_k - \xi_k\|=\|\xi_k - \pi(g^{-1}_i)\xi_k\|\to 0$ for each $i$. We let $S^{-1}$ stand for the set $\{g_1^{-1}, ..., g_n^{-1}\}$. Then $G=\cup_{m=0}^\infty (S\cup S^{-1})^m$. Since $G$ is countable, its subset $Q$ is compact if and only if it is finite, in which case there exists an integer $M$ such that $Q\subset \hat{S}:=\cup_{m=0}^M (S\cup S^{-1})^m$. Given any $\epsilon>0$, we let $\xi$ be such that \[\|\pi (g_i) \xi - \xi\|=\|\xi - \pi(g^{-1}_i)\xi\|<\frac{\epsilon}{M}, \ \ \ 1\leq i\leq n.\]
Then for each $x=x_1\cdots x_m\in \hat{S}$, where $x_i\in S\cup S^{-1}$, we have
\begin{align*}
&\|\pi (x_1\cdots  x_m) \xi - \xi\|\\ 
=& \|\pi (x_1\cdots x_m) \xi - \pi (x_1\cdots x_{m-1}) \xi +\cdots +\pi (x_1 x_2) \xi - \pi (x_1) \xi + \pi (x_1) \xi - \xi\|\\
\le& \|\pi(x_1\cdots x_{m-1})( \pi (x_m) \xi - \xi)\| +\cdots+ \| \pi(x_1)(\pi (x_2) \xi - \xi)\| + \|\pi (x_1) \xi - \xi\|\\
=& \|\pi (x_m) \xi - \xi\| +\cdots+ \|\pi (x_2) \xi - \xi\| + \|\pi (x_1) \xi - \xi\|<m \frac{\epsilon}{M} < \epsilon.
\end{align*}
It follows that $|1-\langle \pi(x)\xi, \xi\rangle|=|\langle \pi(x)\xi-\xi, \xi\rangle \leq  \|\pi(x)\xi-\xi\|<\epsilon$,
which shows that $1_G\prec \pi$.
\end{proof}

Theorem \ref{main1} is a consequence of Theorem \ref{HR} and Lemma \ref{spec}. In Example \ref{specD}, the slice of $p(A_\lb)$ corresponding to $x=1$ is the union
\[\{z_0+z_1+z_2=0\}\cup \{z_0-z_1-z_2=0\}.\] This reflects the fact that $D_\infty$ is amenable. The following case is more interesting.
\begin{example}
We take another look at Example \ref{F2} regarding the free group. The point $(1, -\frac{1}{2}, -\frac{1}{2})$ is in the hyperplane $H_0\subset \mathbb{P}^2$ but not in $R_0$. Hence $p(A_{\lb})$ does not contain $H_0$. In view of Theorem \ref{main1}, this conforms with the nonamenability of $F_2$. On the other hand, if $\xi_1, \xi_2, \xi_3$ are the distinct roots of equation $\xi^3=1$, then $(\xi_1, \xi_2, \xi_3)\in H_0\cap p(A_{\lb})$. Therefore, $H_0\not\subset p(A_{\lb})$ but $H_0\cap p(A_{\lb})\neq \{0\}$. 
\end{example}

\subsection{Haagerup property and Kazhdan's Property (T)}

The two properties of groups have been extensively studied (\cite{BHV,CCJ}). This section only gives a multivariable interpretation of the properties based on Lemma \ref{spec}, short of an in-depth investigation. However, a new perspective usually brings about a new approach. 

\begin{definition} A unitary representation $(\pi, {\mathcal H})$ of a locally compact group $G$ is said to be of $C_0$ if for every $\epsilon>0$ and every $x, y\in {\mathcal H}$ the set 
\[Q(x,y;\epsilon):=\{g\in G\mid |\langle \pi(g)x, y\rangle|\geq \epsilon\}\] is compact.
\end{definition}

Clearly, for a compact group, every representation is of $C_0$. Hence this definition is meaningful only for noncompact groups $G$. If a representation $(\pi, \HH)$ contains $1_G$, then it possesses an invariant vector $x\in \HH$ in the sense that $\pi(g)x=x, g\in G$. In this case $Q(x,y;\epsilon)=G$ whenever $0<\epsilon<|\langle x, y \rangle|$, and hence $\pi$ is not of $C_0$. The weak containment of $1_G$ motivates the following definitions of the two properties.
\begin{definition}
A locally compact group $G$ is said to have {\em Haagerup property} or ({\em a-T-menability}) if there exists a $C_0$-representation $\pi$ of $G$ such that $1_G\prec \pi$. 
\end{definition}

\begin{definition}
A topological group $G$ is said to have Kazhdan's Property (T) if for every unitary representation $\pi$ of $G$, the weak containment $1_G\prec \pi$ implies the containment $1_G< \pi$.
\end{definition}
Since the left regular representation $\lb_G$ is of $C_0$ (\cite{Di69}), it follows from Theorem \ref{HR} that every amenable group has Haagerup property. If $G$ is compact, then $1_G\prec \pi$ implies $1_G< \pi$. Thus every compact group has Kazhdan's Property (T). On the other hand, no locally compact but noncompact amenable group $G$ has Kazhdan's Property (T) because otherwise Theorem \ref{HR} would give $1_G<\lb$, i.e., the regular representation $\lb$ has an invariant vector $f\in L^2(G)$, or in other words, $f$ is a nontrivial constant function in $L^2(G)$. This would imply that $G$ is compact which is a contradiction. In light of Lemma \ref{spec}, the two properties have the following multivariable description.
\begin{proposition}\label{Kazhdan}
Assume group $G$ is finitely generated. Then the following statements hold.

(a) $G$ has Haagerup property if and only if there exists a $C_0$-representation $\pi$ of $G$ such that $H_0\subset p(A_{\pi})$.

(b) $G$ has Kazhdan's property (T) if and only if for any unitary representation $\pi$ the inclusion $H_0\subset p(A_{\pi})$ implies $1_G< \pi$. 
\end{proposition}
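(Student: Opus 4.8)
The plan is to obtain both statements as immediate translations of Lemma \ref{spec}, which asserts that for a finitely generated group the weak containment $1_G \prec \pi$ is equivalent to the geometric inclusion $H_0 \subset p(A_\pi)$. Since $G$ is assumed finitely generated, this equivalence is available for \emph{every} unitary representation $\pi$, so the entire argument will consist of substituting ``$H_0 \subset p(A_\pi)$'' for ``$1_G \prec \pi$'' in the definitions of the two properties.

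First I would handle part (a). The Haagerup property is by definition the existence of a $C_0$-representation $\pi$ satisfying $1_G \prec \pi$. I would apply Lemma \ref{spec} to rewrite $1_G \prec \pi$ as $H_0 \subset p(A_\pi)$, leaving the $C_0$ hypothesis on $\pi$ untouched, since it plays no role in the spectral translation and is simply carried along unchanged. This at once yields the stated equivalence: $G$ has the Haagerup property if and only if some $C_0$-representation $\pi$ satisfies $H_0 \subset p(A_\pi)$.

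Next I would treat part (b). Kazhdan's Property (T) is by definition the assertion that, for every unitary representation $\pi$, the weak containment $1_G \prec \pi$ implies the genuine containment $1_G < \pi$. For each fixed $\pi$, I would again invoke Lemma \ref{spec} to replace the hypothesis $1_G \prec \pi$ by $H_0 \subset p(A_\pi)$, while keeping the conclusion $1_G < \pi$ as is. The defining implication then reads $H_0 \subset p(A_\pi) \Rightarrow 1_G < \pi$, quantified over all unitary $\pi$, which is precisely the claimed characterization.

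I do not anticipate a genuine obstacle, as both parts are formal consequences of Lemma \ref{spec}. The one point warranting a moment's care is confirming that the lemma applies uniformly across all representations in play: its standing hypotheses are merely that $G$ be finitely generated and that $\pi$ be a unitary representation, both of which hold throughout, so no representation-dependent side condition can interfere with the substitution.
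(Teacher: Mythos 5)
Your proposal is correct and matches the paper exactly: the paper gives no separate proof, merely introducing the proposition with the phrase ``In light of Lemma \ref{spec},'' i.e., both parts are obtained precisely by substituting $H_0\subset p(A_\pi)$ for $1_G\prec\pi$ in the definitions of the Haagerup property and Property (T), as you do. Your added check that Lemma \ref{spec} applies uniformly (finite generation of $G$, unitarity of $\pi$, with the $C_0$ condition carried along untouched) is the right point of care and is all that is needed.
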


Theorem \ref{main1} and Proposition \ref{Kazhdan} are clear evidences that projective spectrum is able to capture some intrinsic properties of a group.

\section{Self-similarity and Julia Set}

An important line of study on the joint spectrum of groups was carried out by Grigorchuk and his collaborators on self-similar groups, for instance in \cite{BG, GN07, GS}. Roughly speaking, such groups have a measure-preserving action on the rooted binary tree that duplicates at each subtree.
A famous example is the group of intermediate growth (\cite{Gr80,Gr83}). Other well-known examples include $D_\infty$, Basilica group, lamplighter group, etc. We refer the readers to \cite{GNS} and \cite{Ne05} for details. This section aims to reveal a close link between the projective spectrum of $D_\infty$ and the Julia set of a rational map due to the self-similarity. 

The following figure shows the first three layers of the rooted binary tree.
\begin{figure}[h]
\begin{tikzpicture}[->,>=stealth',level/.style={sibling distance = 5cm/#1,level distance = 1.5cm}]
\node [arn_r] {$T$}
    child{ node [arn_r] {$T_0$}
            child{ node [arn_r] {$T_{00}$}
                    child{ node {\vdots}}
                    child{ node {\vdots}}}
            child{ node [arn_r] {$T_{01}$}
                    child{ node {\vdots}}
                    child{ node {\vdots}}}
           }
    child{ node [arn_r] {$T_1$}
            child{ node [arn_r] {$T_{10}$}
                     child{ node {\vdots}}
                     child{ node {\vdots}}}
            child{ node [arn_r] {$T_{11}$}
                      child{ node {\vdots}}
                      child{ node {\vdots}}}
	   }
;

\end{tikzpicture}
\caption{A rooted binary tree}
\label{eq:tree}
\end{figure}

Clearly, the tree $T$ consists of two subtrees $T_0$ and $T_1$, each of which also consists of two subtrees: $T_{00}$ and $T_{01}$, and $T_{10}$ and $T_{11}$, respectively, etc. The boundary $\partial T$ of the tree $T$ is the collection of all infinite sequences of directed arrows from the vertex $T$ down the tree. The uniform Bernoulli measure $\mu$ on $\partial T$ is defined by $\mu(\partial T_{i_1\cdots i_p})=\frac{1}{2^p}$, where $i_k\in \{0, 1\}$ for each $k$. In other words, the measure $\mu$ distributes evenly on the subtrees at every level. Since every element in $\partial T$ corresponds to an infinite sequence of directed arrows in $T$, it corresponds to a sequence of $0$s and $1$s. Hence there is a natural bijection from $\partial T$ to the interval $[0, 1]$ (expressed in binary numbers). And this bijection also identifies the measure $\mu$ with the Lebesgue measure on $[0, 1]$. Define the Hilbert space ${\mathcal H}=L^2(\partial T,\mu)$. Let $\mu_i=2\mu,\ i=0, 1$ be the normalized restrictions of $\mu$ on the boundary of the subtrees $ \partial T_0,\ \partial T_1$, and define ${\mathcal H}_i=L^2(\partial T_i,\ \mu_i),$ $i=0, 1$. Then each ${\mathcal H}_i$ can be identified with ${\mathcal H}$ and hence ${\mathcal H}={\mathcal H}_0\oplus {\mathcal H}_1$ can be identified with ${\mathcal H}\oplus {\mathcal H}$ by a unitary $W$. 

\subsection{Self-similar Representation of $D_\infty$}

The following abstract definition reflects the nature of a self-similar Koopman representation.
\begin{definition}
Given an integer $d\geq 2$, a unitary representationr $(\pi, \HH)$ of a group $G$ is said to be $d$-similar if there exists a unitary operator $W: {\mathcal H}\to {\mathcal H}^d$ such that for every $g\in G$ the $d\times d$ block matrix $\hat{\pi}(g)=W\pi(g)W^*$ has all of its entries either equal to $0$ or of the form $\pi(x), x\in G$.
\end{definition}
\noindent In this case, it is clear that $\hat{\pi}$ is a unitary representationr of $G$ on ${\mathcal H}^d$. Since $\hat{\pi}(g)$ itself and each of its nonzero entries are unitaries, every row or column of $\hat{\pi}(g)$ has precisely one nonzero entry.

The Koopman representation $\pi$ of $D_{\infty}=\langle a, t\mid a^2=t^2=1\rangle$ on the tree $T$ is realized by the following self-similar action: $a$ swaps $T_0$ and $T_1$; $t$ acts on  $T_{0}$ like $a$ acting on $T$, while it acts on $T_1$ like what it does on $T$. Thus, if $w$ is a string of ``0''s and ``1''s (could be empty), then the actions can be described by
\[a(0w)=1w,\ \ a(1w)=0w;\ \ \ t(0w)=0a(w),\ \ t(1w)=1t(w).\]
This action can also be described by the 
automaton in Figure 2, where $a$ and $t$ satisfy the recursive relation
$a=\sigma, t\cong a\oplus t$.
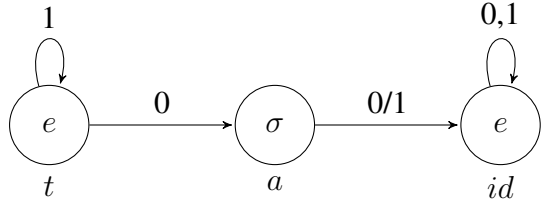
\begin{figure}[h]
\begin{tikzpicture}[>=stealth',shorten >=1pt,auto,node distance=3cm]
 \node[state,label=below:$t$] (S) {$e$};
\node[state,label=below:$a$]         (T) [right of=S] {$\sigma$};
\node[state,label=below:$id$]         (U) [right of=T] {$e$};
\path[->] (S)  edge [loop above] node {1} (S);
 \path[->]            (S) edge              node {0} (T);
\path[->] (T)       edge              node {0/1} (U);
\path[->] (U) edge [loop above] node {0,1} (U);
\end{tikzpicture}
\caption{Automaton of the group $D_\infty$}
\label{fig: Aut}
\end{figure}
Here, ``$0/1$'' means: given input $0$, the output is $1$; and given input $1$, the output is $0$. Thus, the Koopman representation of $D_\infty$ on $\HH=L^2(\partial T, \mu)$ is $2$-similar, and the identification $W:\HH=\HH_0\oplus \HH_1\to \HH\oplus \HH$ mentioned earlier gives rise to the unitary equivalence
\begin{equation}\label{krepD}
\hat{\pi} (a) \cong
\begin{bmatrix}
0 & I \\
I & 0
\end{bmatrix}, \ \ \
\hat{\pi} (t) \cong
\begin{bmatrix}
\pi (a)  & 0 \\
0 & \pi (t)
\end{bmatrix}.\end{equation}
It is shown in \cite{GY17} that the Koopman representation $\pi$ of $D_\infty$ is weakly equivalent to the regular representationr $\lb$. Thus the following fact is an immediate consequence of Example \ref{specD} and Lemma \ref{weakeq}.
\begin{corollary}\label{specK}
With respect to the Koopman representation $\pi$ of $D_\infty$, we have 
\[p(A_\pi)=\bigcup_{-1\leq x\leq 1}\{z\in {\mathbb P}^2\mid z_0^2-z_1^2-z_2^2-2z_1z_2x=0\}.\]
\end{corollary}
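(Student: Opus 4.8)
The plan is to exploit the invariance of the projective spectrum under weak equivalence, which reduces the desired computation for the Koopman representation $\pi$ to the already-established computation for the regular representation $\lb$ in Example \ref{specD}. In other words, I would not attempt to compute $p(A_\pi)$ directly from the self-similar block form (\ref{krepD}); instead I would transport the known answer for $\lb$ across the weak equivalence $\pi\sim\lb$.

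Concretely, I would proceed in three short steps. First, I would invoke the result of \cite{GY17} that the Koopman representation $\pi$ of $D_\infty$ is weakly equivalent to the left regular representation $\lb$, i.e.\ $\pi\prec\lb$ and $\lb\prec\pi$. Second, I would apply Lemma \ref{weakeq} in both directions: the containment $\pi\prec\lb$ gives $p(A_\pi)\subset p(A_\lb)$, while $\lb\prec\pi$ gives $p(A_\lb)\subset p(A_\pi)$, so that $p(A_\pi)=p(A_\lb)$. Here one should note that the pencils genuinely agree under this correspondence, since for $m(z)=z_0 1+z_1 a+z_2 t\in\C[D_\infty]$ one has $A_\pi(z)=\pi(m(z))$ and $A_\lb(z)=\lb(m(z))$, so Lemma \ref{weakeq} applies to exactly these pencils. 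Third, I would substitute the explicit description of $p(A_\lb)$ from Example \ref{specD}, namely
\[p(A_\lb)=\bigcup_{-1\leq x\leq 1}\{z\in\mathbb{P}^2\mid z_0^2-z_1^2-z_2^2-2z_1z_2x=0\},\]
to conclude that $p(A_\pi)$ equals the same union.

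Honestly, there is no serious obstacle internal to this corollary: given the two ingredients already in hand, the statement is immediate. The genuine content has been front-loaded elsewhere, namely (i) the weak-equivalence fact $\pi\sim\lb$ proved in \cite{GY17}, whose proof rests on an amenability-type argument for $D_\infty$, and (ii) the spectral computation in Example \ref{specD}. If I were to worry about anything, it would be confirming that the weak equivalence is stated for the same generating set $\{a,t\}$ used to form the pencil $A(z)=z_0I+z_1\,\cdot a+z_2\,\cdot t$, since $p(A_\pi)$ depends on the choice of generators; once that bookkeeping is checked, the chain of inclusions closes and the proof is complete.
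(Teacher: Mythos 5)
Your proposal is correct and follows exactly the paper's own route: the paper also deduces the corollary immediately from the weak equivalence $\pi\sim\lb$ established in \cite{GY17}, Lemma \ref{weakeq} applied in both directions, and the explicit formula for $p(A_\lb)$ in Example \ref{specD}. Your extra remark about checking that the generating set $\{a,t\}$ matches the one used for the pencil is sound bookkeeping, though the paper treats it as implicit.
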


\subsection{The Renormalization Map}
Based on (\ref{krepD}), we have
\begin{equation}\label{block}
A_{\pi}(z) = z_0 +z_1 \pi (a) +z_2 \pi (t) \cong  A_{\hat{\pi}}(z)= \begin{pmatrix}
z_0+z_2 \pi(a) &	 z_1 \\
z_1  &	 z_0+z_2\pi(t)
\end{pmatrix}.
\end{equation}
Therefore, $A_{\pi}(z)$ is invertible if and only if the $2\times 2$ block matrix on the righ-hand side of (\ref{block}) is invertible. For convenience, we often shall write $\pi(g)$ simply as $g$ in the subsequent computations in this section. In the case $z_0^2 \neq z_2^2$, the pencil $z_0+z_2a$ is invertible and its inverse is $(z_0-z_2 a)(z_0^2-z_2 ^2)^{-1}$. In this case, the block matrix $A_{\hat{\pi}}(z)$ is invertible if and only if the Schur complement $z_0+z_2t-z_1 ^2(z_0-z_2a)(z_0^2-z_2^2)^{-1}$ is invertible, or if and only if the rational pencil
\[ \displaystyle \frac{z_0(z_0^2-z_1^2-z_2^2)}{z_0^2-z_2^2}+\frac{z_1 ^2z_2}{z_0^2-z_2^2}a +  z_2 t\] 
is invertible. This gives rise to the following polynomial map:
\begin{align}
\label{eq: F}
F(z_0,z_1,z_2) &= \left( z_0(z_0^2-z_1^2-z_2^2),  z_1^2 z_2,  z_2(z_0^2-z_2^2) \right)\\
:&=\left(F_0(z), F_1(z), F_2(z)\right).\nonumber
\end{align}  
 Since $F$ is homogeneous of degree $2$, it induces a map in the projective space ${\mathbb P}^2$. For convenience, we shall denote this map also by $F$, namely,
\begin{equation}
\label{eq: FP}
F([z_0: z_1: z_2]) = [ z_0(z_0^2-z_1^2-z_2^2): z_1^2 z_2:  z_2(z_0^2-z_2^2)],\ \ \ z\in {\mathbb P}^2.
\end{equation}  
We should be aware that the map $F$ is not well-defined on ${\mathbb P}^2$ at the common zeros of $F_0, F_1$ and $F_2$ because ${\mathbb P}^2$ contains no origin. For $k=0, 1, ...$, we write the $k$th iteration of $F$ as $F^k([z_0: z_1: z_2])=[F^k_0: F^k_1: F^k_2]$ and set
 \[I_k=\bigcup_{m=1}^k\{z\in {\mathbb P}^2\mid F_j^{m}(z)=0, j=0, 1, 2\}.\] It is clear that $I_{k-1}\subset I_{k}$ for each $k$. The set $I_k$ is called the {\em indeterminacy set} of $F^k$, and the closure $E=\overline{\cup_{k=1}^{\infty}I_n}$ is called the extended indeterminacy set of $F$.
\begin{example}
To determine the indeterminacy set $I_1$, we solve the system of equations $F_0=F_1=F_2=0$ in two cases: $z_1=0$ or $z_2=0$, and easily obtain
\begin{equation}\label{prop: orbits}
I_1= \{ [\pm1:1:0], [0:1:0], [\pm 1: 0:1] \}.
\end{equation}
Observe that $I_1$ reflects the spectral property $\sigma(\pi(a))=\sigma(\pi(t))=\{\pm 1\}$. 
\end{example}
\noindent A careful review of the arguments leading to (\ref{eq: F}) shows the following fact.
\begin{lemma}
 $F$ maps $p(A_\pi)\setminus I_1$ into $p(A_\pi)$. 
 \end{lemma}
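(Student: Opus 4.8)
The plan is to read the lemma off the block-matrix / Schur-complement computation that produced the map $F$ in \eqref{eq: F}, splitting $\mathbb{P}^2$ according to whether the corner block $z_0+z_2\pi(a)$ of \eqref{block} is invertible.

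First I would take $z\in p(A_\pi)\setminus I_1$ with $z_0^2\neq z_2^2$. Since $\pi(a)^2=\pi(a^2)=I$, the pencil $z_0+z_2\pi(a)$ has spectrum contained in $\{z_0+z_2,\ z_0-z_2\}$, hence is invertible exactly when $z_0^2\neq z_2^2$. In that regime the Schur-complement criterion applies to $A_{\hat\pi}(z)$, and, as recorded in the derivation of \eqref{eq: F}, scaling the Schur complement by the nonzero number $z_0^2-z_2^2$ turns it into $A_\pi(F(z))=F_0(z)+F_1(z)\pi(a)+F_2(z)\pi(t)$. Because $A_\pi(z)\cong A_{\hat\pi}(z)$ by \eqref{block}, this yields the equivalence ``$A_\pi(z)$ invertible $\iff$ $A_\pi(F(z))$ invertible''. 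As $z\in p(A_\pi)$ the left-hand side fails, hence so does the right; and since $z\notin I_1$ the triple $F(z)$ is nonzero and names a genuine point of $\mathbb{P}^2$. Thus $F(z)\in p(A_\pi)$.

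The only genuine obstacle is the degenerate locus $z_0^2=z_2^2$, where the corner is not invertible and the Schur reduction is unavailable. I would dispatch it by direct evaluation. For such $z\notin I_1$ one first checks $z_1\neq0$ (if $z_1=0$ then $z_0^2=z_2^2$ forces $z=[1:0:\pm1]\in I_1$) and $z_0,z_2\neq0$ (otherwise $z=[0:1:0]\in I_1$). Substituting $z_0^2-z_2^2=0$ into \eqref{eq: F} gives $F_2(z)=0$, $F_0(z)=-z_0z_1^2$, $F_1(z)=z_1^2z_2$, so cancelling $z_1^2$ leaves $F(z)=[-z_0:z_2:0]$. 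This point obeys $w_0^2-w_1^2-w_2^2=z_0^2-z_2^2=0$, and since its last coordinate is $0$ the term $2w_1w_2x$ drops out; by Corollary \ref{specK} it lies in $p(A_\pi)$ for every $x\in[-1,1]$.

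Combining the two cases gives $F\big(p(A_\pi)\setminus I_1\big)\subseteq p(A_\pi)$. I expect the writing to be almost entirely routine once the case split is made; the point to watch is simply that the clean statement of the first paragraph is only an \emph{equivalence} off the lines $z_0=\pm z_2$, so those lines must be argued separately. As an alternative to the explicit evaluation I could instead invoke compactness: $p(A_\pi)$ is closed and $F$ is continuous on $\mathbb{P}^2\setminus I_1$, so it would suffice to approximate each degenerate point from within $p(A_\pi)$ by points with $z_0^2\neq z_2^2$ and pass to the limit — but verifying such approximability is fussier than the one-line computation above, so I would favor the direct route.
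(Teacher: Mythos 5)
Your proof is correct and is precisely the ``careful review of the arguments leading to (\ref{eq: F})'' that the paper offers in lieu of a written proof: the Schur-complement reduction shows $A_\pi(z)$ is invertible if and only if $A_\pi(F(z))$ is, whenever the corner block $z_0+z_2\pi(a)$ of (\ref{block}) is invertible, i.e.\ off the locus $z_0^2=z_2^2$. Your explicit handling of the degenerate locus --- where $F(z)=[-z_0:z_2:0]$ satisfies $w_0^2-w_1^2-w_2^2=0$ and hence lies in $p(A_\pi)$ by Corollary \ref{specK} --- supplies exactly the case that the paper's one-sentence justification leaves implicit, and your computation there is right.
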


With a bit more efforts, $I_2$ can be determined as well. However, it is not feasible to fully determine $I_k$ when $k$ gets larger, not mentioning $I_\infty(F)$ or the extended indeterminacy set $E$. This fact adds difficulty to the study of $F's$ dynamical properties. To simplify the situation, we consider the map $\tau: {\mathbb P}^2\to \hat{\C}$ defined in (\ref{eq: tauhat}). Then, according to Corollary \ref{specK}, $\tau(z)\in [-1, 1]$ if and only if $z\in p(A_\pi)$. This fact, in particular, implies that $p^c(A_\pi)=\tau^{-1}(\C\setminus [-1, 1])$ is a dense open subset of ${\mathbb P}^2$. Using the function $\tau$, we can write
\[F(z)=[2\T(z)z_0z_1z_2:z_1^2z_2:z_1z_2(2\T(z) z_2 +z_1)], \ \ z\in {\mathbb P}^2\setminus I_1.\]
This indicates that the complications of the indeterminacy sets $I_k$ is largely due to the common factor $z_1z_2$. Therefore, in order to avoid this unnecessary complication, we consider the {\em renormalization map} of $D_\infty$ associated with the Koopman representation $\pi$ as
\begin{align}
F_\pi(z)=[2\T(z)z_0:z_1:2\T(z) z_2 +z_1],\ \ z\in \mathbb{P}^2\setminus I_1(F_\pi),
\label{eq: F w T}
\end{align}
which is display (5.1) in \cite{GY}. Clearly, $F_\pi(z)=F(z)$ whenever $z_1z_2\neq 0$. It is important to observe that, since $\tau$ is homogeneous of degree $0$, the map $F_\pi$ on ${\mathbb P}^2$ is in fact homogeneous of degree $1$. This lends great convenience to the study of $F_\pi$'s dynamical property. Let $I_k(F_\pi), k=0, 1, ...$ be the indeterminacy sets of $F_\pi$ and let $E_{F_\pi}$ be the extended indeterminacy set. 
\begin{lemma}
$E(F_\pi)=I_1(F_\pi)=\{[\pm 1:0:1]\}$.
\end{lemma}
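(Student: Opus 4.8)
The plan is to pin down $I_1(F_\pi)$ by a direct case analysis of the degree-one lift $(2\tau z_0,\,z_1,\,2\tau z_2+z_1)$, and then to show that applying $F_\pi$ to any point outside $I_1(F_\pi)$ lands again outside $I_1(F_\pi)$, so that iteration creates no new indeterminacy. First I would observe that indeterminacy of $F_\pi$ at $z$ means exactly that this lift cannot be normalized to a point of $\mathbb{P}^2$. If $z_1\neq 0$ and $\tau(z)$ is finite, the value $[2\tau z_0:z_1:2\tau z_2+z_1]$ is a genuine point with nonzero middle coordinate, so no indeterminacy occurs. The only remaining candidates lie on $z_1z_2=0$, where $\tau$ may take the value $\infty$; there I would factor $2\tau$ out of the lift and check that the limit is a bona fide point, namely $[z_0:0:z_2]$ when $z_1=0,\ z_2\neq 0,\ z_0^2\neq z_2^2$, the point $[1:0:0]$ when $z_2=0,\ z_0\neq 0$ (and also when $z=[1:0:0]$), and $[0:1:1]$ when $z=[0:1:0]$. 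Consequently the lift collapses to $(0,0,0)$, which is the only true indeterminacy, precisely when $z_1=0$ and $\tau=0$, i.e. $z_1=0$ and $z_0^2=z_2^2$; in $\mathbb{P}^2$ this is exactly $\{[\pm1:0:1]\}$, establishing $I_1(F_\pi)=\{[\pm1:0:1]\}$.

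Next I would rule out new indeterminacy under iteration. By definition a point would belong to $I_k(F_\pi)\setminus I_1(F_\pi)$ for some $k\geq 2$ only if it avoids $I_1(F_\pi)$ yet its image lies in $I_1(F_\pi)=\{[\pm1:0:1]\}$. So the task reduces to computing $F_\pi^{-1}\big(\{[\pm1:0:1]\}\big)$. Since the target has vanishing middle coordinate, I would split again on whether $\tau(z)$ is finite. If $\tau(z)$ is finite, the middle coordinate of $F_\pi(z)$ is $z_1$, forcing $z_1=0$; but $z_1=0$ makes $\tau\in\{0,\infty\}$, and finiteness then gives $\tau=0$, hence $z_0^2=z_2^2$, so $z\in I_1(F_\pi)$. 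If $\tau(z)=\infty$, the resolved values computed above are $[z_0:0:z_2]$ with $z_0^2\neq z_2^2$, or $[1:0:0]$, or $[0:1:1]$, none of which equals $[\pm1:0:1]$ (the first would require $z_0^2=z_2^2$). Therefore $F_\pi^{-1}\big(\{[\pm1:0:1]\}\big)$ meets the domain $\mathbb{P}^2\setminus I_1(F_\pi)$ in the empty set.

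It follows that $F_\pi$ maps $\mathbb{P}^2\setminus I_1(F_\pi)$ into itself, so by an immediate induction $F_\pi^k$ is defined on all of $\mathbb{P}^2\setminus I_1(F_\pi)$ for every $k$. Hence $I_k(F_\pi)=I_1(F_\pi)$ for all $k$, the union $\bigcup_{k\geq 1}I_k(F_\pi)$ equals the finite set $I_1(F_\pi)$, and being finite it is already closed. Taking closures gives $E(F_\pi)=\overline{\bigcup_k I_k(F_\pi)}=I_1(F_\pi)=\{[\pm1:0:1]\}$, as claimed.

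The delicate point throughout is the treatment of the locus $\{z_1z_2=0,\ z_0^2\neq z_1^2+z_2^2\}$ where $\tau=\infty$: one must verify carefully that $F_\pi$ is genuinely \emph{determinate} there, both to cut $I_1(F_\pi)$ down to the two points and to exclude spurious preimages in the second step. This is exactly where passing from $F$ to the renormalized degree-one map $F_\pi$ earns its keep, since dividing out the factor $z_1z_2$ is precisely what removes the extra indeterminacy points of $F$ on that locus and keeps the indeterminacy from proliferating under iteration.
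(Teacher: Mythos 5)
Your proposal is correct and follows essentially the same route as the paper: compute $I_1(F_\pi)$ directly (vanishing of all three coordinates of the lift forces $z_1=0$ and $\tau(z)=0$, giving $\{[\pm 1:0:1]\}$), then verify that no point outside $I_1(F_\pi)$ maps into it, so that $I_k(F_\pi)=I_1(F_\pi)$ for every $k$ and the finite, hence closed, union equals $E(F_\pi)$ --- your write-up is in fact more explicit than the paper's about the locus $\tau=\infty$ and about the induction step, which the paper compresses into ``it is sufficient to check $I_2=I_1$.'' One small imprecision: at $z=[0:1:0]$, factoring $2\tau$ out of the lift collapses it to $(0,0,0)$ rather than yielding $[0:1:1]$; that value instead comes from direct substitution of $z_0=z_2=0$ into the lift, but since the middle coordinate $z_1$ is nonzero there, the point is determinate under the paper's definition of $I_k$ in either reading, and none of your conclusions are affected.
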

\begin{proof}
It is sufficient to check that $I_2=I_1$. For a point $z\in {\mathbb P}^2$ to be in $I_1(F_\pi)$, we must have $z_1=0$, and $2\T(z)z_0=2\T(z)z_2=0$. Since $z_0$ and $z_0$ are not both $0$, we have $\tau(z)=0$. It follows from the definition (\ref{eq: tauhat}) that $z_0^2=z_2^2$, and therefore $I_1(F_\pi)=\{[\pm 1:0:1]\}$. 

To determine $I_2(F_\pi)$, it remains to find points $z\in {\mathbb P}^2$ such that $F_\pi(z)\in I_1(F_\pi)$ which means $z_1=0$ and $2\T(z)z_0=\pm 2\T(z)z_2\neq 0$. This implies $z_0=\pm z_2\neq 0$ and hence $z\in I_1(F_\pi)$.
\end{proof} 
\noindent Observe that $E(F_\pi)\subset p(A_\pi)$.

\subsection{The Julia Set of $F_\pi$}

For a rational map $H: \mathbb{P}^n\to \mathbb{P}^n$, the notions of Fatou set and Julia set is defined as follows (\cite{Forn}).

\begin{definition}
  A point $p \in \mathbb{P}^n\setminus E(H)$ is said to be a {\em Fatou point} of $H$ if it has a neighborhood $U\subset \mathbb{P}^n$ on which the sequence of iterations $\{H^{k}\mid k=1, 2, ...\}$ is a normal family. The {\em Fatou set} ${\mathcal F}(H)$ is the set of Fatou points of $H$, and the Julia set ${\mathcal J}(H)$ is the complement $\mathbb{P}^n\setminus {\mathcal F}(H)$.
\end{definition}
Clearly, the extended indeterminacy set $E(H)$ is a subset of ${\mathcal F}(H)$. The following one variable example is crucial for the subsequent discussion.
\begin{example}\label{Tch}
We identify $\mathbb{P}$ with the extended complex plane $\hat{\C}$. Consider the Tchebyshev polynomial $T(x)=2x^2-1, x\in \hat{\C}$. It is known, for instance see \cite{Be}, that its Julia set ${\mathcal J}(T)=[-1, 1]$. Moreover, the iteration sequence $\{T^n\}$ converges to $\infty$ uniformly on every compact subset in ${\mathcal F}(T)$. 
\end{example}
Somewhat surprisingly, the Julia set of the map $F$ in (\ref{eq: F}) was shown to be closely related to the projective spectrum: ${\mathcal J}(F)=p(A_\pi)\cup E$. This is Theorem 5.11 in \cite{GY}. However, this result is not entirely satisfactory due to the lack of a clear picture of $E$. Since the renormalization map $F_\pi$ has a much simpler extended indeterminacy set, it makes one wonder whether a cleaner theorem holds for its Julia set. It is indeed the case.
\begin{theorem}\label{Julia}
${\mathcal J}(F_\pi)=p(A_\pi)$.
\end{theorem}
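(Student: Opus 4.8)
The plan is to exploit a semi-conjugacy between $F_\pi$ and the Tchebyshev map $T(x)=2x^2-1$ of Example \ref{Tch} through the map $\tau$. First I would verify the identity $\tau\circ F_\pi=T\circ\tau$ on $\mathbb{P}^2\setminus E(F_\pi)$. Writing $t=\tau(z)$ and $F_\pi(z)=[2tz_0:z_1:2tz_2+z_1]$, a direct substitution using $2tz_1z_2=z_0^2-z_1^2-z_2^2$ shows that $\tau(F_\pi(z))$ and $2t^2-1$ reduce to the same rational expression $\tfrac{8t^3z_1z_2+4t^2z_1^2-4tz_1z_2-2z_1^2}{4tz_1z_2+2z_1^2}$, so $\tau(F_\pi(z))=T(\tau(z))$. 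Since $p(A_\pi)=\tau^{-1}([-1,1])$ by Corollary \ref{specK} and the remark after \eqref{eq: tauhat}, while $\mathcal J(T)=[-1,1]$ and $T^n\to\infty$ locally uniformly on $\mathcal F(T)=\hat{\mathbb C}\setminus[-1,1]$, the theorem reduces to the assertion $\mathcal F(F_\pi)=\tau^{-1}(\mathcal F(T))$; that is, the Fatou/Julia dichotomy transfers through $\tau$.

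For the inclusion $\mathcal J(F_\pi)\subseteq p(A_\pi)$, I would show every $z_*$ with $\tau(z_*)\notin[-1,1]$ is a Fatou point. Choosing a neighborhood $U$ with $\tau(U)$ inside a compact subset of $\mathcal F(T)$, the semi-conjugacy gives $\tau(F_\pi^k(z))=T^k(\tau(z))\to\infty$ uniformly on $U$, driving the iterates toward the locus $\tau=\infty$, namely $\{z_1z_2=0\}$. The key local fact is that the line $L=\{z_1=0\}$ is fixed pointwise by $F_\pi$ (away from $E(F_\pi)$) and attracts quadratically: in the chart $z_0=1$ one computes $F_\pi([1:z_1:z_2])=[1:\tfrac{z_1^2z_2}{1-z_1^2-z_2^2}:z_2+\tfrac{z_1^2z_2}{1-z_1^2-z_2^2}]$, so the first affine coordinate is squared at each step while the second is nearly fixed. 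Summing the resulting errors, $\{F_\pi^k\}$ converges locally uniformly on $\tau^{-1}(\mathcal F(T))$ to a holomorphic map with image in $L$, which more than suffices for normality.

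For the reverse inclusion $p(A_\pi)\subseteq\mathcal J(F_\pi)$, I would argue by contradiction: if some $z_*\in\mathcal F(F_\pi)$ had $\tau(z_*)\in[-1,1]$, then since $\mathcal F(F_\pi)$ is open and the critical and indeterminacy locus of $\tau$ (the latter being $\{[\pm1:0:1],[\pm1:1:0]\}$) is a proper subvariety, I may assume $\tau$ is a submersion at $z_*$. Taking a local holomorphic section $s$ of $\tau$ with $s(\tau(z_*))=z_*$, the semi-conjugacy yields $T^k=\tau\circ F_\pi^k\circ s$ near $\tau(z_*)$; normality of $\{F_\pi^k\}$ near $z_*$ then forces normality of $\{T^k\}$ near $\tau(z_*)\in[-1,1]=\mathcal J(T)$, a contradiction. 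Together the two inclusions give $\mathcal F(F_\pi)=\tau^{-1}(\mathcal F(T))$ and hence $\mathcal J(F_\pi)=\tau^{-1}([-1,1])=p(A_\pi)$; note the indeterminacy points $E(F_\pi)=\{[\pm1:0:1]\}$ lie in $p(A_\pi)$ and, being excluded from $\mathcal F(F_\pi)$ by definition, already belong to $\mathcal J(F_\pi)$.

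The hard part will be the convergence analysis of the second paragraph: the base dynamics $T^k(\tau)\to\infty$ only controls $F_\pi^k$ in the $\tau$-direction, and upgrading this to genuine local uniform convergence of the full $\mathbb{P}^2$-valued iterates requires the transverse contraction estimate near $L$, together with careful handling of points that first approach the other component $\{z_2=0\}$ (which collapses to the fixed point $[1:0:0]\in L$) before settling near $L$. A secondary technical nuisance, in the third paragraph, is ensuring that the limits of $F_\pi^k\circ s$ avoid the indeterminacy points of $\tau$, so that the push-down $\tau\circ F_\pi^k\circ s=T^k$ remains valid in the limit.
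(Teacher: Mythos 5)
Your first two paragraphs follow the paper's own route: the identity $\tau\circ F_\pi=T\circ\tau$ is the paper's Lemma \ref{prop: pre-diagram}, and your transverse-contraction analysis near $L=\{z_1=0\}$ is exactly what the paper makes precise by proving, by induction, the closed form $F_\pi^{n}(z)=\left[z_0:\frac{z_1}{p_n(z)}:z_2+z_1f_n(z)\right]$ of (\ref{lem: Fn}), where $p_n=2^n\prod_{k=0}^{n-1}T^{k}(\tau)$ satisfies $|p_n|\geq 2^{n}$ eventually on compacts of $p^c(A_\pi)$. This single formula delivers the normal limit $F_*$ of (\ref{Fstar}) and makes the "hard part" you flag, including your worry about orbits passing near $\{z_2=0\}$, disappear; so that half of your plan is sound and coincides in substance with Lemma \ref{thm: fatou}.

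The genuine gap is in your third paragraph, and it is not the \emph{secondary technical nuisance} you call it: the avoidance you propose to "ensure" is provably impossible, so the push-down of normality through $\tau$ cannot be carried out. Suppose, as in your contradiction hypothesis, some subsequence $F_\pi^{k_j}$ converges locally uniformly to a holomorphic $G$ on a connected neighborhood $V$ of $z_*\in p(A_\pi)$. Since $p^c(A_\pi)$ is open and dense, $G$ agrees with $F_*$ on $V\cap p^c(A_\pi)$, hence $G(V)\subset L=\{z_1=0\}$ by continuity. But on $L$ one has $\tau\equiv\infty$ except at the two discontinuity points $[\pm1:0:1]$. Now take $w$ in the segment $W\cap[-1,1]$, so that $s(w)\in p(A_\pi)$: the semi-conjugacy gives $\tau\bigl(F_\pi^{k_j}(s(w))\bigr)=T^{k_j}(w)\in[-1,1]$, so if $G(s(w))$ were any point of $L$ other than $[\pm1:0:1]$, continuity of $\tau$ there would force $T^{k_j}(w)\to\infty$, which is absurd. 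Hence $G\circ s$ takes values in $\{[1:0:1],[-1:0:1]\}$ on the entire segment: the limit lands \emph{exactly on} the indeterminacy points of $\tau$, and the identity $\tau\circ F_\pi^{k_j}\circ s=T^{k_j}$ does not survive the passage to the limit. Indeed it had better not, since $\{T^k\}$ is genuinely non-normal at every point of $[-1,1]$, while your hypothesis only concerns normality of $\{F_\pi^k\}$; an unconditional implication "normality upstairs $\Rightarrow$ normality downstairs" would make the Fatou half of your own argument contradict itself.

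The repair is to use the forced collision itself as the contradiction: $G\circ s$ is holomorphic on the disc $W$ and constant on a segment with accumulation points, hence constant on $W$ by the identity theorem; but on $W\setminus[-1,1]$ it equals $F_*\circ s=[s_0:0:s_2+s_1\,(f\circ s)]$, which is non-constant for a suitable choice of $z_*$ and $s$ (with $z_{*,1}\neq 0$, the jump of $f=\tau-\sqrt{\tau^2-1}$ across $p(A_\pi)$ is incompatible with $s$ being holomorphic across the segment). This completed argument is, in substance, the paper's proof: there one picks $\xi$ with $\tau(\xi)=\cos\theta$, $\theta/\pi$ non-dyadic, and reads off from $F_\pi^{n}(\xi)=\bigl[\xi_0:\xi_1\frac{\sin\theta}{\sin(2^n\theta)}:\cdots\bigr]$ that the middle coordinate stays bounded away from $0$, so no holomorphic limit extending $F_*$ (whose middle coordinate vanishes identically) can exist near $\xi$. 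So once the gap is filled, your section-based route collapses into the paper's density-plus-discontinuity argument rather than providing an independent alternative.
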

\noindent The proof follows the same line as that in \cite{GY}, and only some small modifications are needed to suit the change from $F$ to $F_\pi$. For the readers' convenience, we include all the necessary steps here but leave out some details that can be found in \cite{GY}. We start with the following lemma.
\begin{lemma}\label{prop: pre-diagram}
For $k \geq 1$ the following diagram is commutative: 
$$\begin{tikzcd}
    {\mathbb{P}^2 \setminus E(F_\pi)} \arrow{r}{F_\pi^{k}}  \arrow{d}{\tau}
    &  \mathbb{P}^2\setminus E(F_\pi) \arrow{d}{\tau} \\
    {\hat{\mathbb{C}}} \arrow{r}{T^{k}} 
    & {\hat{\mathbb{C}}}.
\end{tikzcd}$$
\end{lemma}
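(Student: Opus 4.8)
The plan is to reduce the commutativity of the diagram for general $k$ to the case $k=1$, and then verify the base case by a direct computation using the explicit formulas for $F_\pi$ and $\tau$. Once the $k=1$ square is established, the diagram for arbitrary $k$ follows by induction, pasting commutative squares together; the only point requiring care is that the maps are well-defined along the way, i.e.\ that the $\tau$-values and iterates stay in the appropriate domains, which is precisely why the diagram is stated on $\mathbb{P}^2\setminus E(F_\pi)$ with $E(F_\pi)=I_1(F_\pi)=\{[\pm 1:0:1]\}$ removed.

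First I would treat the base case $k=1$, that is, I would show $\tau(F_\pi(z))=T(\tau(z))=2\tau(z)^2-1$ for every $z\in\mathbb{P}^2\setminus E(F_\pi)$ with $\tau(z)\neq\infty$. Writing $w=F_\pi(z)=[2\tau(z)z_0:z_1:2\tau(z)z_2+z_1]$ and abbreviating $\T=\tau(z)$, I would substitute these coordinates $w_0=2\T z_0$, $w_1=z_1$, $w_2=2\T z_2+z_1$ into the defining expression
\[
\tau(w)=\frac{w_0^2-w_1^2-w_2^2}{2w_1w_2}
\]
from (\ref{eq: tauhat}), and simplify. Expanding the numerator gives $4\T^2 z_0^2-z_1^2-(2\T z_2+z_1)^2=4\T^2(z_0^2-z_2^2)-2z_1^2-4\T z_1z_2$, while the denominator is $2z_1(2\T z_2+z_1)=4\T z_1z_2+2z_1^2$. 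Using the relation $z_0^2-z_2^2=2\T z_1z_2+z_1^2$, which follows directly from the definition $\T=(z_0^2-z_1^2-z_2^2)/(2z_1z_2)$, I expect the numerator to collapse to $(2\T^2-1)(4\T z_1z_2+2z_1^2)$, whence $\tau(w)=2\T^2-1=T(\T)$, as required.

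Next I would handle the inductive step and the degenerate values. Assuming the square commutes for $k$, the $(k{+}1)$-case is obtained by composing: $\tau\circ F_\pi^{k+1}=T\circ(\tau\circ F_\pi^{k})=T^{k+1}\circ\tau$, valid on the set where all intermediate points avoid $E(F_\pi)$. The genuine obstacle is bookkeeping the exceptional loci rather than the algebra: I must confirm that the computation remains meaningful when $z_1z_2=0$ or when $\tau(z)\in\{0,\infty\}$, so that no point is mapped into or through the indeterminacy set in a way that breaks commutativity. Since $E(F_\pi)=\{[\pm1:0:1]\}$ has already been identified as the full extended indeterminacy set, removing it guarantees that $F_\pi^{k}$ is well-defined on the domain, and the case $z_1z_2=0$ with $z\notin E(F_\pi)$ can be treated as a limiting or separate direct check, using that $\tau$ is continuous where defined and that $\tau(z)=0$ on the relevant boundary locus $z_0^2-z_1^2-z_2^2=0$ forces $T(\tau(z))=-1$, which one verifies matches $\tau(F_\pi(z))$ by the same substitution. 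This step is where the small modifications relative to \cite{GY} enter, since the simpler form of $E(F_\pi)$ for the renormalized map is exactly what makes the domain $\mathbb{P}^2\setminus E(F_\pi)$ clean enough for the induction to close.
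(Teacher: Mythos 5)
Your overall strategy --- reduce to $k=1$ by induction and then verify $\tau(F_\pi(z))=T(\tau(z))$ by direct substitution --- is exactly the paper's proof (the paper states the reduction and leaves the computation to the reader), and your generic algebra is correct: when $z_1z_2\neq 0$, $\tau(z)\neq\infty$, and both coordinates $w_1=z_1$ and $w_2=2\tau(z)z_2+z_1$ of $w=F_\pi(z)$ are nonzero, the numerator $w_0^2-w_1^2-w_2^2$ indeed factors as $\left(2\tau(z)^2-1\right)\cdot 2w_1w_2$, which gives the identity.

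However, your case analysis has a genuine gap. You enumerate the exceptional cases only by conditions on the \emph{input} ($z_1z_2=0$, or $\tau(z)\in\{0,\infty\}$), but your substitution into the ratio formula for $\tau(w)$ also requires a condition on the \emph{output}, namely $w_1w_2\neq 0$, which you never verify. It fails exactly when $2\tau(z)z_2+z_1=0$, i.e.\ on the lines $z_0=\pm z_2$ (with $z_1z_2\neq 0$), which $F_\pi$ collapses to the points $[\mp 1:1:0]$. At those points the first branch of (\ref{eq: tauhat}) assigns $\tau=0$, and the claimed identity actually breaks: for $z=[1:1:1]$ one has $\tau(z)=-\tfrac12$ and $F_\pi(z)=[-1:1:0]$, so $\tau(F_\pi(z))=0$ while $T(\tau(z))=-\tfrac12$. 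Your fallback argument that ``$\tau$ is continuous where defined'' cannot rescue this, because it is false: $[\pm 1:1:0]$ are indeterminacy points of the rational function defining $\tau$, where the limit depends on the direction of approach. To be fair, the paper's own one-line proof glosses over the same point, and in \cite{GY} the issue does not arise because the diagram there is stated off the much larger extended indeterminacy set of $F$, which contains $[\pm 1:1:0]$ together with all of its preimages. A correct version of your argument must likewise excise the lines $z_0=\pm z_2$ and their iterated $F_\pi$-preimages (a nowhere dense set, so this is harmless for the density and normal-family arguments that follow), or else amend the convention defining $\tau$ at $[\pm 1:1:0]$; as literally stated, the base case cannot be closed.
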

\begin{proof}
It is sufficient to check that $\tau(F_\pi(z))=T(\tau(z)), z\notin E(F_\pi)$, and it can be easily done by a direct computation.
\end{proof}
\noindent This connection is crucial because the iterations of $F_\pi$ can now be studied through that of $T$, and thus Example \ref{Tch} can be used to study the dynamical property of $F_\pi$. For $n=1, 2, ...,$ a proof by induction gives \cite{GY} Lemma 5.8, namely,
\begin{align}\label{Fnz}
&F_\pi^{n}(z)\nonumber \\
=&\bigg[2^n z_0 \prod_{k=0}^{n-1} T^{k} ( \T ): z_1: 2^n z_2 \prod_{k=0}^{n-1} T^{\circ k} ( \T ) + z_1 \bigg(1 + \sum_{k= 1} ^{n-1} 2^k \prod_{i=1}^{k} T^{(n-i)} ( \T )\bigg)  \bigg] .
\end{align}
To simplify subsequent calculations, we define functions 
\begin{align}\label{pn}
p_n(z)=2^n \prod_{k=0}^{n-1} T^{k} ( \T(z) ), \ \ z\in {\mathbb P}^2, n=1, 2, ...
\end{align}
Observe that if $z\in p^c(A_\pi)$, then $\T(z)\notin [-1, 1]$, and Example \ref{Tch} implies $T^{k} ( \T(z) )\neq 0$ for every $k\geq 0$. Thus, as in \cite{GY} display (5.2), we can set
\begin{equation}
\label{eq: fn}
f_n(z) = \sum_{j=1}^{n} \frac{1}{p_j(z)}, \ n \geq 2, z\in p^c(A_\pi).
\end{equation} 
Since $T(x)$ is holomorphic on $\hat{\C}$ and $\tau$ is holomorphic on $p^c(A_\pi)$, the function $f_n$ is holomorphic on $p^c(A_\pi)$. Then, $F_\pi^{n}(z)$ can be simplified as 
\begin{align}\label{lem: Fn}
F_\pi^{n} (z)=\left[z_0: \frac{z_1}{p_n(z)}: z_2+ z_1f_n(z)\right],\ \ z\in p^c(A_\pi),
\end{align}
which is Lemma 5.8 in \cite{GY}. And the following lemma holds as well.
\begin{lemma}  $p^c(A_\pi) \subset {\mathcal F}(F_\pi) $. 
\label{thm: fatou}
\end{lemma}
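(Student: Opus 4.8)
The plan is to show that the complement $p^c(A_\pi)$ lies in the Fatou set $\mathcal{F}(F_\pi)$ by exhibiting a neighborhood around each point of $p^c(A_\pi)$ on which the iterates $\{F_\pi^n\}$ form a normal family. The natural strategy is to prove that the iterates actually \emph{converge} locally uniformly to a holomorphic limit map, since local uniform convergence to a holomorphic map immediately yields normality. The expressions (\ref{lem: Fn}), (\ref{pn}), and (\ref{eq: fn}) are tailor-made for this: on $p^c(A_\pi)$ we have written $F_\pi^n(z)=\left[z_0: \frac{z_1}{p_n(z)}: z_2+z_1 f_n(z)\right]$, so controlling the limit reduces to understanding the asymptotics of the scalar functions $p_n(z)$ and $f_n(z)$ as $n\to\infty$.

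First I would fix a point $w\in p^c(A_\pi)$, so that $\tau(w)\notin[-1,1]$. By Example \ref{Tch}, the Tchebyshev iterates $T^k(\tau(w))$ escape to $\infty$, and more precisely $T^k\to\infty$ locally uniformly on $\mathcal{F}(T)=\hat{\C}\setminus[-1,1]$. Since $\tau$ is continuous (indeed holomorphic) on $p^c(A_\pi)$, I can choose a small neighborhood $U\subset p^c(A_\pi)$ of $w$ on which $\tau$ takes values in a compact subset of $\hat{\C}\setminus[-1,1]$, and hence on which $|T^k(\tau(z))|$ grows without bound uniformly. From the definition $p_n(z)=2^n\prod_{k=0}^{n-1}T^k(\tau(z))$, this forces $|p_n(z)|\to\infty$ uniformly on $U$; in fact the growth is superexponential because each factor $|T^k(\tau(z))|$ itself tends to infinity. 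Consequently $\frac{z_1}{p_n(z)}\to 0$ uniformly on $U$, and the tail of the series $f_n(z)=\sum_{j=1}^n p_j(z)^{-1}$ is dominated by a rapidly converging majorant, so $f_n\to f_\infty:=\sum_{j=1}^\infty p_j(z)^{-1}$ uniformly on $U$ to a holomorphic function.

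It then follows that $F_\pi^n(z)\to\left[z_0:0:z_2+z_1 f_\infty(z)\right]$ uniformly on $U$, and this limit is a holomorphic map into $\mathbb{P}^2$ (one must check the limit lands in a well-defined point of $\mathbb{P}^2$, i.e.\ that $z_0$ and $z_2+z_1 f_\infty(z)$ do not vanish simultaneously, which can be arranged by shrinking $U$ and using that $z_0\neq 0$ is generic off a hyperplane, or by working in the appropriate affine chart). Local uniform convergence to a holomorphic map makes $\{F_\pi^n\}$ a normal family on $U$, so $w\in\mathcal{F}(F_\pi)$. Since $w$ was arbitrary, $p^c(A_\pi)\subset\mathcal{F}(F_\pi)$.

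The main obstacle I anticipate is the bookkeeping near points where $z_1=0$ or where the homogeneous coordinates degenerate, together with the behavior at the boundary of $U$ as $\tau(z)$ approaches $[-1,1]$. Near the boundary the uniform escape of $T^k$ fails, so the neighborhood $U$ must be chosen strictly inside $p^c(A_\pi)$; this is fine for proving $p^c(A_\pi)\subset\mathcal{F}(F_\pi)$ but is exactly why the reverse containment (that $p(A_\pi)$ itself lies in the Julia set) is genuinely harder and handled separately. A secondary technical point is verifying that the limit map is well-defined projectively and that the convergence is truly in the $\mathbb{P}^2$ metric rather than merely coordinatewise; handling the coordinate patches carefully, and invoking the commutative diagram of Lemma \ref{prop: pre-diagram} to transfer normality of $\{T^k\}$ on $\hat{\C}\setminus[-1,1]$ upstairs through $\tau$, should resolve this cleanly.
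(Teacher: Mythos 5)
Your strategy coincides with the paper's own proof: both rest on the closed form (\ref{lem: Fn}), on the uniform escape of $T^{k}(\tau(z))$ to $\infty$ over compact subsets of $p^c(A_\pi)$ furnished by Example \ref{Tch}, on the resulting uniform convergence $f_n\to f$, and on the conclusion that $F_\pi^{n}$ converges locally uniformly to $F_*(z)=[z_0:0:z_2+z_1f(z)]$.

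However, the issue you flag only parenthetically --- that $z_0$ and $z_2+z_1f(z)$ might vanish simultaneously --- is a genuine gap, and your proposed remedies (shrink $U$; use that $z_0\neq 0$ is generic off a hyperplane; pass to an affine chart) cannot close it, because the simultaneous vanishing occurs at honest interior points of $p^c(A_\pi)$. Consider $\xi_s=[0:1:s]$ with $0<|s|<1$. Then $\tau(\xi_s)=-\frac{1+s^2}{2s}$ is the negative of the Joukowski image of $s$, hence lies outside $[-1,1]$, so $\xi_s\in p^c(A_\pi)$. The line $\{z_0=0\}$ is invariant under $F_\pi$, and a one-line computation gives $F_\pi([0:1:s])=[0:1:-s^2]$, hence $F_\pi^{n}(\xi_s)=[0:1:-s^{2^n}]\to[0:1:0]$. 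Comparing this with (\ref{lem: Fn}) forces $z_2+z_1f_n(\xi_s)=-s^{2^n}\cdot\frac{z_1}{p_n(\xi_s)}$, a product of two quantities tending to $0$; letting $n\to\infty$ gives $z_2+z_1f(\xi_s)=0$ while also $z_0=0$ (equivalently, via (\ref{Lf}) one checks $f(\xi_s)=-s$). So at $\xi_s$ the limit triple in (\ref{Fstar}) is $(0,0,0)$ and the claimed limit map is not defined. Since $\xi_s$ is the \emph{center} of any neighborhood $U$ you would choose, no shrinking avoids it, and the chart $\{z_0\neq0\}$ omits exactly the points at issue.

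Worse, this is not a repairable technicality of the write-up: normality genuinely fails there. At nearby points with $z_0\neq 0$, your argument correctly shows $F_\pi^{n}(z)\to[z_0:0:z_2+z_1f(z)]$, a point of the form $[1:0:\ast]$, whereas on the line the iterates converge to $[0:1:0]$, which is orthogonal to every point $[1:0:\ast]$ in the Fubini--Study metric. The pointwise limit of $\{F_\pi^{n}\}$ is therefore discontinuous at every $\xi_s$, so no subsequence can converge uniformly on any neighborhood of $\xi_s$, and $\xi_s$ cannot be a Fatou point; the commutative diagram of Lemma \ref{prop: pre-diagram} cannot detect this, since the discontinuity is invisible to $\tau$. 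Thus your argument establishes only $p^c(A_\pi)\setminus\{[0:1:s]:0<|s|<1\}\subset{\mathcal F}(F_\pi)$. You should be aware that the paper's own proof is silent on this very point (it never verifies that the limit triple in (\ref{Fstar}) is nonzero), so the difficulty you noticed is real and touches the lemma as stated, not merely your proof of it; resolving it requires either excising this disc from the statement or re-examining how $F_\pi$, its indeterminacy set, and its Julia set are to be understood along the invariant line $\{z_0=0\}$.
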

\begin{proof}
Since $\T(z)$ is holomorphic on $p^c(A_\pi)$, for every compact subset $K\subset p^c(A_\pi)$ the image $\T(K)$ is compact in $\hat{\C}\setminus [-1, 1]$. Thus $\{T^{k}(\T(z))\}$ converges uniformly to $\infty$ on $K$ by Example \ref{Tch}. The definition (\ref{pn}) thus permits the existence of a $N\in \mathbb{N}$ such that $\frac{1}{|p_n(z)|}\leq 2^{-n}$ for every $n\geq N$. This implies that the series 
\begin{equation}
\label{eq: fz}
f(z) = \lim_{n \to \infty} f_n(z)= \frac{1}{p_1(z)}+ \frac{1}{p_2(z)}+\cdots
\end{equation}
converges uniformly on $K$. Equation (\ref{lem: Fn}) then implies that $F_\pi^{n} (z)$ converges normally on $p^c(A_\pi)$ to the map 
\begin{equation}\label{Fstar}
F_\ast ([z_0:z_1:z_2])=[z_0:0:z_2 +z_1 f(z)],\ \ \ z\in p^c(A_\pi).
\end{equation}
\end{proof}
Lemma \ref{thm: fatou} implies the inclusion ${\mathcal J}(F_\pi)\subset p(A_\pi)$. The other direction
$p(A_\pi)\subset {\mathcal J}(F_\pi)$ is already shown in the proof of \cite{GY} Theorem 5.11. But here we shall give a simpler proof using the density of $p^c(A_\pi)$ in $\mathbb{P}^2$. Observe that if $\xi \in p(A_\pi)$ then $\T(\xi) \in [-1,1]$, and hence we can write $\tau(\xi)=\cos \theta$, for some $\theta\in [0, \pi]$. Then $T ^{n} (\tau(\xi)) = \cos (2^n \theta), n=1, 2, ...$. Suppose $\xi\in p(A_\pi)$ is such that $\frac{\theta}{\pi}$ is non-dyadic, i.e., $2^n\frac{\theta}{\pi}\notin {\mathbb Z}$ for any integer $n\geq 0$. Then, in light of Corollary 4.2, we have $\xi_1\neq 0$. Moreover, for every $n$,
\begin{equation} 
p_n(\xi)=2^n \prod_{k=0}^{n-1} \cos(2^k \theta )= \frac{\sin (2^{n}\theta)}{\sin \theta}\neq 0,
\end{equation}
and it follows from (\ref{Fnz}) that
\[ F_\pi^{n}(\xi) = \bigg[\xi_0 : \xi_1\frac{\sin \theta}{\sin (2^{n}\theta)}: \xi_2  + \xi_1\sin\theta \sum_{k=1}^{n}(\sin(2^k\theta))^{-1}\bigg]. \]
If $\xi$ were a Fatou point, then there would exist a path-connected neighborhood $V$ of $\xi$ and a subsequence $\{F_\pi^{n_k}\}$ that converges normally to a holomorphic function $\hat{F_*}$ on $V$. Since $p^c(A_\pi)$ is dense in $\mathbb{P}^2$, the limit $\hat{F_*}$ must be an holomorphic extension of the function $F_*$ in (\ref{Fstar}) from $V\cap p^c(A_\pi)$ to $V$. But due to the fact $|\frac{\sin \theta}{\sin (2^{n}\theta)}|\geq \sin \theta>0$ for all $n$, $\hat{F_*}(z)$ is not continuous at $\xi$. This is a contradiction. Theorem \ref{Julia} is thus established. 

Using the same method as in the proof of \cite{GY} Theorem 5.14, the limit function $f$ in (4.11) can be explicitly determined as
\begin{equation}\label{Lf}
f(z)=\tau(z)-\sqrt{\tau^2(z)-1},\ \ \ z\in p^c(A_\pi),
\end{equation}
\noindent where $\tau$ is as defined in (\ref{eq: tauhat}). 

Theorem \ref{Julia}, together with formula (4.12) and (\ref{Lf}), present a rare case where the Julia set of a nontrivial multivariable map and the limit of its iteration sequence can both be explicitly computed. What is more interesting is the role played by self-similar group representation in this case. A recent exposition on this subject can be found in Dang-Grigorchuk-Lyubich \cite{DGL}. In Zu-Yang-Lu \cite{ZYL}, similar results are obtained for the lamplighter group, though in a more complicated form. These facts make one wonder whether the same is true for other self-similar groups, such as the Basilica group and the Grigorchuk group of intermediate growth.


\begin{thebibliography}{1}

\bibitem{BCY} J. Bannon, P. Cade and R. Yang, {\em On the spectrum of operator-valued entire functions}. Illinois J. of Mathematics 55 No.4 (2011).

\bibitem{Be} A. F. Beardon, \textit{Iteration of Rational Functions}, Graduate Text in Mathematics 132, Springer-Verlag, New York, 1991.

\bibitem{BG} L. Bartholdi and R. Grigorchuk, {\em On the spectrum of Hecke type operators related to some fractal groups}, Tr. Mat. Inst. Steklova 231 (2000), Din. Sist., Avtom. i Beskon. Gruppy, 5--45; translation in Proc. Steklov Inst. Math. 2000, no. 4 (231), 1–41.

\bibitem{BHV} B. Bekka, de la Harpe and A. Valette, {\em Kazhdan's property (T)}, New Mathematical Monographs 11, Cambridge University Press, Cambridge, 2008.

\bibitem{CCJ} P-A. Cherix, M. Cowling, P. Jolissaint, P. Julg and A. Valette, {\em Groups with the Haagerup property}, Progress in Math. 197, Birkh\"auser Verlag, 2001.

\bibitem{CST} \v{Z}. \v{C}u\v{c}kovi\'{c}, M. Stessin and A. Tchernev, {\em Determinantal hypersurfaces and representations of Coxeter groups}, Pacific J. Math. 313 (2021), no. 1, 103-135.

\bibitem{CSZ} I. Chagouel, M. Stessin and K. Zhu, {\em Geometric spectral theory for compact operators}, Trans. Amer. Soc. 368 (2016), No. 3, 1559-1582.

\bibitem{CT82} M. Cho and M. Takaguchi, {\em Identity of Taylor's joint spectrum and Dash's joint spectrum}, Stud. Math. 70 (1982), 225-229.

\bibitem{Da73} A. T. Dash, {\em Joint spectra}, Stud. Math. 45 (1973), 225-237.

\bibitem{De} R. Dedekind, {\em Gesammelte Mathematische Werke}, Vol. II. Chelsea, New York, 1969.


\bibitem{DGL} B. Dang, R. Grigorchuk and M. Lyubich, {\em Self-similar groups and holomorphic dynamics: Renormalization, integrability, and spectrum}, arXiv:2010.00675.

\bibitem{Di69} J. Dixmier, {\em $C^*$-algebras} (a translation of {\em Les $C^*$-alg\`{e}bres et leurs repr\'{e}sentations}), North-Holland Publishing Company, 1977.

\bibitem{Di75} L. E. Dickson, {\em An elementary exposition of Frobenius theory of group characters and group-determinants}, Ann. of Math. 4 (1902), 25-49; Mathematical Papers, Vol. II. Chelsea, New York, 1975, 737-761.

\bibitem{Do} R. G. Douglas, {\em Banach algebra techniques in operator theory},
Pure and Applied Mathematics, Vol. 49, Academic Press, New York-London, 1972.

\bibitem{Forn} J. Fornaess, {\em Dynamics in Several Complex Variables}, CBMS Regional Conference Series in Mathematics 87, the American Mathematical Society, Providence, RI, 1996.

\bibitem{Fr} F. G. Frobenius,  {\em \"{U}ber vertauschbare Matrizen, Sitzungsberichte der K\"{o}niglich Preussischen}, Akademie der Wissenschaften zu Berlin (1896) 601-614; Gesammelte Abhandlungen, Band II Springer-Verlag, New York, 1968, 705-718.

\bibitem{Fu} B. Fuglede, {\em A commutativity theorem for normal operators}, Proc. Natl. Acad. Sci. (USA) 369 (1) (1950), 35-40.

\bibitem{Gr80} R. I. Grigorchuk, {\em Burnside's problems on periodic groups}, Functional Anal. Appl. 14 (1980), no. 1, 41-43.		
		
\bibitem{Gr83} R. I. Grigorchuk, {\em On the Milnor problem of group growth}, Soviet Math. Dokl. 28 (1983), no. 1, 23-26.


\bibitem{GN07} R. Grigorchuk and V. Nekrashevych,  {\em Self-similar Groups, Operator Algebras and Schur Complement}, J. Mod. Dyn. 1 \textbf{3} (2007), 323-370.	

\bibitem{GNS} R. Grigorchuk, V. Nekrashevich, and V. Sushchanskii, {\em Automata, dynamical systems, and groups}, Proc. Steklov Inst. Math. 231 (2000), 128-203.

\bibitem{GS} R. Grigorchuk and Z. \v{S}uni\'{c}, {\em Schreier spectrum of the Hanoi Towers group on three pegs}, Proc. of Symposia in Pure Math. Vol. 77, 2008.

\bibitem{GY17} R. Grigorchuk and R. Yang, {\em Joint spectrum and infinite dihedral group}, Proc. of the Steklov Institute of Math., 2017, Vol. 297, 145-178.

\bibitem{GY} B. Goldberg and R. Yang, {\em Self-similarity and spectral dynamics}, J. Operator Theory 87 (2022), no. 2, 355-388.

\bibitem{Ha72} R. Harte, {\em Spectral mapping theorems}, Proc. Royal Irish Acad., 72 A (1972), 89-107.

\bibitem{Ho} L. H\"{o}rmander, {\em An introduction to complex analysis in several variables}, 3rd ed., North Holland, Amsterdam, 1990.

\bibitem{Ke1} H. Kesten, {\em Symmetric random walks on groups}, Trans. Amer. Math. Soc. 22 (1959), 336-354.

\bibitem{KV} I. Klep and J. Vol\v{c}i\v{c}, {\em A note on group representations, determinantal hypersurfaces and their quantizations}, Operator theory, functional analysis and applications, 393-402, Oper. Theory Adv. Appl., 282, Birkh\''{a}user/Springer, Cham, 2021.

\bibitem{MQW} T. Mao, Y. Qiao and P. Wang, {\em Commutativity of normal compact operators via projective spectrum}, Proc. Amer. Math. Soc. 146 (2018), 1165-1172.

\bibitem{Ne05} V. Nekrashevych, {\em Self-similar groups}, Mathematical Survey and Monographs, A.M.S Providence, RI, 2005.

\bibitem{Neu} J. von Neumann, {\em Zur allgemeinen theorie des masses}, Fund. Math., 13 (1929), 73-116.

\bibitem{Ta} J. L. Taylor, {\em A joint spectrum for several commutative operators}, J. Funct. Analy. 6 (1970), 172-191.

\bibitem{Ya} R. Yang, {\em Projective spectrum in Banach algebras}, J. Topol. and Analy. 1 (2009), No. 3, 289-306.

\bibitem{Ya24} R. Yang, {\em A spectral theory of noncommuting operators}, to be published by Springer Natural.

\bibitem{ZYL} C. Zu, Y. Yang and Y. Lu, {\em Spectral dynamics for the infinite dihedral group and the Lamplighter group}, to appear in Indiana Univ. Math. J.

\end{thebibliography}
\end{document}